\def\titlerunning#1{\gdef\titrun{#1}}
\def\author#1{\gdef\autrun{\def\and{\unskip, }#1}\gdef\@author{#1}}
\def\address#1{{\def\and{\\\hspace*{18pt}}\renewcommand{\thefootnote}{}%
\footnote {#1}}%
\markboth{\autrun}{\titrun}}
\def\email#1{\hspace*{4pt}{\em e-mail}: #1}
\def\MSC#1{{\renewcommand{\thefootnote}{}%
\footnote{\emph{Mathematics Subject Classification (2020):} #1}}}
\def\keywords#1{\par\medskip
\noindent\textbf{Keywords:} #1}
\newtheorem{theorem}{Theorem}[section]
\newtheorem{prop}[theorem]{Proposition}
\newtheorem{cor}[theorem]{Corollary}
\newtheorem{lemma}[theorem]{Lemma}
\theoremstyle{definition}
\newtheorem{remark}[theorem]{Remark}
\numberwithin{equation}{section}
\def\cA{\mathcal A}
\def\cC{\mathcal C}
\def\cD{\mathcal D}
\def\cE{\mathcal E}
\def\cG{\mathcal G}
\def\cH{\mathcal H}
\def\cL{\mathcal L}
\def\cO{\mathcal O}
\def\cP{\mathcal P}
\def\cQ{\mathcal Q}
\def\cR{\mathcal R}
\def\cS{\mathcal S}
\def\cT{\mathcal T}
\def\cW{\mathcal W}
\def\cX{\mathcal X}
\def\PG{{\rm PG}}
\def\F{{\mathbb F}}
\def\PGL{{\rm PGL}}
\def\PSL{{\rm PSL}}
\def\PSp{{\rm PSp}}
\def\j{\boldsymbol j}
\def\v{\boldsymbol v}
\def\w{\omega}
\begin{document}


\baselineskip=16pt

\titlerunning{}

\title{The $m$-ovoids of $\cW(5, 2)$}

\author{Michela Ceria
\and 
Francesco Pavese}

\date{}

\maketitle

\address{M. Ceria, F. Pavese: Dipartimento di Meccanica, Matematica e Management, Politecnico di Bari, Via Orabona 4, 70125 Bari, Italy; \email{\{michela.ceria, francesco.pavese\}@poliba.it}
}


\MSC{Primary 51E22; 94B05. Secondary 51E20.}

\begin{abstract}
In this paper we are concerned with $m$-ovoids of the symplectic polar space $\cW(2n+1, q)$, $q$ even. In particular we show the existence of an elliptic quadric of $\PG(2n+1, q)$ not polarizing to $\cW(2n+1, q)$ forming a $\left(\frac{q^n-1}{q-1}\right)$-ovoid of $\cW(2n+1, q)$. A further class of $(q+1)$-ovoids of $\cW(5, q)$ is exhibited. It arises by glueing together two orbits of a subgroup of $\PSp(6, q)$ isomorphic to $\PSL(2, q^2)$. We also show that the obtained $m$-ovoids do not fall in any of the examples known so far in the literature. Moreover, a computer classification of the $m$-ovoids of $\cW(5, 2)$ is acquired. It turns out that $\cW(5, 2)$ has $m$-ovoids if and only if $m = 3$ and that there are exactly three pairwise non-isomorphic examples. The first example comes from an elliptic quadric $\cQ^-(5, 2)$ polarizing to $\cW(5, 2)$, whereas the other two are the $3$-ovoids previously mentioned. 

\keywords{Symplectic polar space; $m$-ovoid; elliptic quadric.}
\end{abstract}

\section{Introduction}

Let $V$ be a $2(n+1)$-dimensional vector space over $\F_q$, where $\F_q$ is the finite field of order $q$ and let $\PG(V) = \PG(2n+1, q)$ be the projective space whose underlying vector space is $V$. We will use the term $r$-space to indicate an $r$-dimensional projective subspace of $\PG(V)$. The symplectic polar space $\cW(2n+1, q)$ consists of the subspaces of $\PG(2n+1, q)$ induced by the totally isotropic subspaces of $V$ with respect to a non-degenerate alternating form of $V$. The subspaces of $\cW(2n+1, q)$ of maximum dimension are $n$-spaces and are called {\em generators}. An $m$-ovoid $\cX$ of $\cW(2n+1, q)$ is a set of points meeting every generator of $\cW(2n+1, q)$ in $m$ points. The case $m = 1$ was studied by J. Thas who proved that $\cW(2n+1, q)$ has a $1$-ovoid if and only if $n = 1$ and $q$ is even \cite{Thas}. An $h$-system of $\cW(2n+1, q)$ is a set of $q^{n+1}+1$ pairwise opposite $h$-spaces of $\cW(2n+1, q)$, see \cite{ST} for more details. The concept of $m$-ovoids first appeared in \cite{ST}, where the authors proved that the set of points covered by an $h$-system of $\cW(2n+1, q)$ (and more generally of a finite classical polar space) is a $\left(\frac{q^{h+1}-1}{q-1}\right)$-ovoid of $\cW(2n+1, q)$. These objects were further investigated in \cite{BKLP} where the following equivalent definition of an $m$-ovoid $\cX$ of $\cW(2n+1, q)$ was given
\begin{align*}
|\cX \cap P^\perp| = 
\begin{cases}
m(q^{n-1}+1) - q^{n-1} & \mbox{ if } P \in \cX, \\
m(q^{n-1} +1) & \mbox{ if } P \notin \cX. 
\end{cases}
\end{align*} 
It follows that an $m$-ovoid $\cX$ of $\cW(2n+1, q)$ has two intersection numbers with respect to hyperplanes and hence gives rise to a strongly regular graph and a projective $2$-weight code \cite{CK}. As for constructions, field reduction provides a method for obtaining an $\left(m \frac{q^{2t}-1}{q-1}\right)$-ovoid of $\cW(2t(2n+1)-1, q)$ from an $m$-ovoid of a Hermitian polar space $\cH(2n, q^{2t})$  and an $\left(m \frac{q^{t}-1}{q-1}\right)$-ovoid of $\cW(2nt-1, q)$ from an $m$-ovoid of $\cW(2n-1, q^t)$, see \cite{K}. Another technique for constructing $m$-ovoids of $\cW(2n+1, q)$ has been recently outlined in \cite{FWX} by means of certain strongly regular Cayley graphs over $(\F_q^{2n+2}, +)$ of negative Latin square type. This allows the authors to obtain $m$-ovoids of $\cW(2n+1, q)$, $q$ odd, for many values of $m$, see \cite[Theorem 3.5]{FWX}. Further constructions can be found in \cite{BLP, CCEM, CP, CP1}. 

Assume that $q$ is even and let $\cQ^-(2n+1, q)$ be an elliptic quadric of $\PG(2n+1, q)$. Since $q$ is even, the quadric $\cQ^-(2n+1, q)$ defines a symplectic polarity of $\PG(2n+1, q)$. If $\cW(2n+1, q)$ denotes the associated symplectic polar space of $\PG(2n+1, q)$, then it is said that $\cQ^-(2n+1,q )$ is an elliptic quadric {\em polarizing to} $\cW(2n+1, q)$. In this case $\cQ^-(2n+1, q)$ is a $\left(\frac{q^n-1}{q-1}\right)$-ovoid of $\cW(2n+1, q)$. In Subsection~\ref{FirstConstr}, we show the existence of an elliptic quadric of $\PG(2n+1, q)$ not polarizing to $\cW(2n+1, q)$ forming a $\left(\frac{q^n-1}{q-1}\right)$-ovoid of $\cW(2n+1, q)$, see Theorem~\ref{first}). In Subsection~\ref{SecondConstr}, a further class of $(q+1)$-ovoids of $\cW(5, q)$, $q$ even, is presented, see Theorem~\ref{second}. In particular, it is obtained by glueing together two orbits of a subgroup $G$ of $\PSp(6, q)$ isomorphic to $\PSL(2, q^2)$. The group $G$ leaves invariant a parabolic quadric $\cQ(4, q)$ with the property that exactly $q^2+1$ of its lines are lines of $\cW(5, q)$. In Section~\ref{iso}, we show that the obtained $m$-ovoids do not fall in any of the examples known so far in the literature. Finally, in Section~\ref{W(5,2)} we prove with the aid of a computer that $\cW(5, 2)$ has $m$-ovoids if and only if $m = 3$ and that there are exactly three pairwise non-isomorphic examples. The first example arises from an elliptic quadric $\cQ^-(5, 2)$ polarizing to $\cW(5, 2)$, whereas the other two are the $3$-ovoids described in Theorem~\ref{first} and Theorem~\ref{second}. 

\section{Constructions}

Throughout the paper we assume $q$ to be an even prime power. 

\subsection{$\left(\frac{q^n-1}{q-1}\right)$-ovoids of $\cW(2n+1, q)$, $n \ge 2$, from elliptic quadrics}\label{FirstConstr}

Let $\PG(2n+1, q)$, $n \ge 2 $, be the $(2n+1)$-dimensional projective space over $\F_{q}$ equipped with homogeneous projective coordinates $(X_1, X_2, \dots, X_{2n+2})$. Let $\delta \in \F_q$ be such that the polynomial $X^2 + X + \delta$ is irreducible over $\F_q$ and let us consider the following quadrics of $\PG(2n+1, q)$
\begin{align*}
    & \cQ_{\mu}: X_1^2+X_1X_{2n+2}+\delta X_{2n+2}^2+ \sum_{i = 2}^{n+1} X_i X_{2n+3-i} + \mu \left(X_{2n}^2+X_{2n}X_{2n+1}+\delta X_{2n+1}^2\right) =0, \mu \in \F_q, \\
    & \cQ_{\infty}:  X_{2n}^2+X_{2n}X_{2n+1}+\delta X_{2n+1}^2  =0. 
\end{align*}
Then $\{\cQ_\mu \colon \mu \in \F_q\} \cup \{\cQ_\infty\}$ is a pencil of quadrics, where $\cQ_\mu$ is an elliptic quadric for every $\mu \in \F_q$ (cf. \cite[Theorem 1.2]{HT}) and $\cQ_\infty$ is a degenerate quadric consisting of the points of the $(2n-1)$-space 
\begin{align*}
\Delta: X_{2n} = X_{2n+1} = 0.    
\end{align*} 
The base locus of the pencil consists of the cone of $\Delta$ having as vertex the line 
\begin{align*}
\ell: X_1 =0, X_4 = \dots = X_{2n+1} = 0,    
\end{align*} 
and as base an elliptic quadric $\cQ^-(2n-3, q)$. The symplectic polarity $\perp_\mu$ associated with $\cQ_\mu$ is given by the bilinear form with Gram matrix
\begin{align*}
& J_\mu = \begin{pmatrix}
0 & 0 & 0 & 0 & \hdots & 0 & 0 & 0 & 1 \\
0 & 0 & 0 & 0 & \hdots & 0 & 0 & 1 & 0 \\
0 & 0 & 0 & 0 & \hdots & 0 & 1 & 0 & 0 \\
0 & 0 & 0 & 0 & \hdots & 1 & 0 & 0 & 0 \\
\vdots & \vdots & \vdots & \vdots & \ddots & \vdots &\vdots & \vdots & \vdots \\ 
0 & 0 & 0 & 1 & \hdots & 0 & 0 & 0 & 0 \\
0 & 0 & 1 & 0 & \hdots & 0 & 0 & \mu & 0 \\
0 & 1 & 0 & 0 & \hdots & 0 & \mu & 0 & 0 \\
1 & 0 & 0 & 0 & \hdots & 0 & 0 & 0 & 0 \\
\end{pmatrix}.
\end{align*}

Let $\cW_\mu$ be the symplectic polar space of $\PG(2n+1, q)$ with polarity $\perp_\mu$. Then 
\[
\ell^{\perp_\mu}=\Delta.
\]
Our goal will be to show that $\cQ_\mu$, $\mu \in \F_q \setminus \{0\}$ is a $\left(\frac{q^n-1}{q-1}\right)$-ovoid of $\cW_0$, despite the fact that it is an elliptic quadric not polarizing to $\cW_0$.

Let $\chi$ be a $(2n-s)$-space of $\PG(2n+1, q)$ and $\Upsilon$ an $s$-space of $\chi$. Recall that a projectivity of $\PG(2n+1,q)$ fixing pointwise $\chi$ and stabilizing the $(s+1)$-spaces through $\Upsilon$ is called {\em elation with axis $\chi$ and center $\Upsilon$}. 

\begin{lemma}\label{commute}
If $\mu \in \F_q \setminus \{0\}$, then the two polarities $\perp_0$ and $\perp_\mu$ commute. Moreover $\perp_{0} \perp_{\mu} = \perp_{\mu} \perp_0$ is the involutory elation $\eta$ of $\PG(2n+1, q)$ having axis $\Delta$ and center $\ell$.
\end{lemma}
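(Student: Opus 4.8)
The plan is to reduce everything to a single rank-$2$ perturbation of the anti-diagonal Gram matrix and then exploit that we are in characteristic $2$. First I would record the structural observation that, writing $J_0$ for the exchange (anti-diagonal) matrix and $E$ for the symmetric matrix whose only nonzero entries are $E_{2n,2n+1}=E_{2n+1,2n}=1$, one has $J_\mu=J_0+\mu E$ (for $n\ge 2$ the two perturbed positions genuinely lie off the anti-diagonal, so this is an honest sum). Since $J_0$ is the exchange matrix we have $J_0^2=I$, hence $J_0^{-1}=J_0$. With the usual convention that a polarity with symmetric Gram matrix $J$ sends the point $\langle v\rangle$ to the hyperplane $v^{T}J$, the composite correlation $\perp_0\perp_\mu$ is the projectivity represented by $M:=J_0^{-1}J_\mu=I+\mu J_0E$, while $\perp_\mu\perp_0$ is represented by $J_\mu^{-1}J_0=M^{-1}$.

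Next I would analyse the nilpotent part $N:=\mu J_0E$. A direct calculation gives $Ne_{2n}=\mu e_2$, $Ne_{2n+1}=\mu e_3$ and $Ne_j=0$ for $j\notin\{2n,2n+1\}$, so $N$ has rank $2$ with $\mathrm{Im}(N)=\langle e_2,e_3\rangle=\ell$ and $\ker N=\{X_{2n}=X_{2n+1}=0\}=\Delta$. Because $n\ge 2$, the indices $2,3$ are disjoint from $\{2n,2n+1\}$, whence $\mathrm{Im}(N)\subseteq\ker N$ and therefore $N^2=0$. As $q$ is even this yields $M^2=(I+N)^2=I+2N+N^2=I$, so $M$ is a linear involution and $M^{-1}=M$. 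But $M^{-1}$ represents $\perp_\mu\perp_0$, so $M^{-1}=M$ says exactly $\perp_0\perp_\mu=\perp_\mu\perp_0$, i.e.\ the two polarities commute. This is the step I expect to carry the content, and the only things that can go wrong are bookkeeping in the composition convention and the verification $N^2=0$; both hinge precisely on $n\ge 2$ and on $q$ being even.

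Finally I would identify the involution $\eta:=\perp_0\perp_\mu=M$ geometrically by reading off its fixed locus and its direction. Since $Mv=v$ exactly when $Nv=0$, the map $M$ fixes $\ker N=\Delta$ pointwise, so the $(2n-1)$-space $\Delta$ is the axis (this is the case $s=1$ in the definition of elation). Since $Mv-v=Nv\in\mathrm{Im}(N)=\ell$ for every $v$, any plane $\pi\supseteq\ell$ is stabilized: it is spanned by $\ell\subseteq\Delta$, whose points are fixed, together with a further point $\langle v\rangle$ whose image $\langle v+Nv\rangle$ again lies in $\pi$; hence $\ell$ is the center, and one checks $\ell=\mathrm{Im}(N)\subseteq\ker N=\Delta$ as an elation requires. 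As $\mu\neq 0$ forces $M\neq I$ while $M^2=I$, the elation $\eta$ is involutory, giving exactly the claimed statement: $\perp_0\perp_\mu=\perp_\mu\perp_0=\eta$ is the involutory elation with axis $\Delta$ and center $\ell$.
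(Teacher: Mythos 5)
Your proof is correct and takes essentially the same route as the paper: both reduce the claim to the single matrix identity $(J_0)^{-t}J_\mu=(J_\mu)^{-t}J_0=I+\mu J_0E$, which the paper simply displays and declares sufficient. Your write-up additionally makes explicit what the paper leaves implicit, namely that $N=\mu J_0E$ satisfies $N^2=0$ (so $I+N$ is an involution in characteristic $2$, which is exactly why the two products coincide) and that $\ker N=\Delta$ and $\mathrm{Im}(N)=\langle e_2,e_3\rangle=\ell$ identify the composite as the involutory elation with the stated axis and center.
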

\begin{proof}
It is enough to note that
\begin{align*}
& (J_0)^{-t} J_\mu = (J_\mu)^{-t} J_0 = 
\begin{pmatrix}
1 & 0 & 0 & 0 & \hdots & 0 & 0 & 0 & 0 \\
0 & 1 & 0 & 0 & \hdots & 0 & \mu & 0 & 0 \\
0 & 0 & 1 & 0 & \hdots & 0 & 0 & \mu & 0 \\
0 & 0 & 0 & 1 & \hdots & 0 & 0 & 0 & 0 & \\
\vdots & \vdots & \vdots & \vdots & \ddots & \vdots &\vdots & \vdots & \vdots \\ 
0 & 0 & 0 & 0 & \hdots & 1 & 0 & 0 & 0 \\
0 & 0 & 0 & 0 & \hdots & 0 & 1 & 0 & 0 \\
0 & 0 & 0 & 0 & \hdots & 0 & 0 & 1 & 0 \\
0 & 0 & 0 & 0 & \hdots & 0 & 0 & 0 & 1 \\
\end{pmatrix}.
\end{align*}
\end{proof}

\begin{lemma}\label{lemma:perp}
Let $\mu \in \F_q$. If $\Xi$ is a $t$-space of $\Delta$, then $\Xi^{\perp_0} = \Xi^{\perp_\mu}$.
\end{lemma}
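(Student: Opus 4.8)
The plan is to reduce the statement to a direct comparison of the two alternating forms when the first argument is constrained to lie in $\Delta$. First I would record that the Gram matrices $J_0$ and $J_\mu$ differ only in the two symmetric off-diagonal entries sitting in rows and columns $2n$ and $2n+1$: the matrix $J_\mu - J_0$ has as its only nonzero entries $\mu$ in positions $(2n, 2n+1)$ and $(2n+1, 2n)$. Writing $B_0$ and $B_\mu$ for the alternating forms with Gram matrices $J_0$ and $J_\mu$, this yields, for any representative vectors $x = (x_1, \dots, x_{2n+2})$ and $y = (y_1, \dots, y_{2n+2})$,
\begin{align*}
B_\mu(x, y) = B_0(x, y) + \mu\left(x_{2n} y_{2n+1} + x_{2n+1} y_{2n}\right).
\end{align*}

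Next I would exploit the hypothesis $\Xi \subseteq \Delta$. Since $\Delta$ is the $(2n-1)$-space $X_{2n} = X_{2n+1} = 0$, every point of $\Xi$ is represented by a vector $x$ with $x_{2n} = x_{2n+1} = 0$. For such an $x$ the correction term above vanishes identically in $y$, so $B_\mu(x, y) = B_0(x, y)$ for \emph{every} vector $y$.

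Finally, I would conclude by unwinding the definition of the polar subspaces: $\Xi^{\perp_0}$ (respectively $\Xi^{\perp_\mu}$) consists of the points represented by those $y$ satisfying $B_0(x, y) = 0$ (respectively $B_\mu(x, y) = 0$) for all $x$ representing points of $\Xi$. By the previous step these two systems of linear conditions coincide, whence $\Xi^{\perp_0} = \Xi^{\perp_\mu}$. There is no genuine obstacle here; the only point requiring (trivial) care is to note that the vanishing coordinates belong to the first argument, so that the discrepancy between $B_\mu$ and $B_0$ disappears for every choice of the second argument $y$. This is precisely what forces the two polars to coincide outright, rather than merely agree inside $\Delta$.
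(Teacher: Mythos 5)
Your proof is correct, and it takes a genuinely different route from the paper's. The paper proves this lemma in one line from Lemma~\ref{commute}: since $\perp_0 \perp_\mu = \perp_\mu \perp_0$ is the involutory elation $\eta$ with axis $\Delta$ and center $\ell$, any $\Xi \subseteq \Delta$ satisfies $\left(\Xi^{\perp_0}\right)^{\perp_\mu} = \Xi^\eta = \Xi$, and applying $\perp_\mu$ once more yields $\Xi^{\perp_0} = \Xi^{\perp_\mu}$. You bypass the commuting-polarities lemma entirely and compare the alternating forms directly: $J_\mu - J_0$ is supported only in positions $(2n, 2n+1)$ and $(2n+1, 2n)$, so
\begin{align*}
B_\mu(x, y) = B_0(x, y) + \mu\left(x_{2n} y_{2n+1} + x_{2n+1} y_{2n}\right),
\end{align*}
and this correction term vanishes for every $y$ as soon as $x$ represents a point of $\Delta$, whence the two polars are cut out by identical linear systems. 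Your claims check out against the displayed Gram matrices, and your argument is more elementary and self-contained: it uses no elations and no double-polar manipulation, and it treats $\mu = 0$ on the same footing, whereas the paper's Lemma~\ref{commute} is stated only for $\mu \in \F_q \setminus \{0\}$ (the case $\mu = 0$ being trivial anyway). What the paper's route buys is economy and conceptual reuse: the elation $\eta$ is needed again later (for instance $g^{\perp_\mu} = g^\eta$ in the proof of Lemma~\ref{lemma:perp2}), so deriving the present lemma from Lemma~\ref{commute} keeps a single geometric mechanism --- the two polarities differ by an elation fixing $\Delta$ pointwise --- at the center of all the subsequent arguments.
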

\begin{proof}
The claim follows since $\left(\Xi^{\perp_0}\right)^{\perp_\mu} = \left(\Xi^{\perp_\mu}\right)^{\perp_0} = \Xi^\eta = \Xi$ by Lemma~\ref{commute}.
\end{proof}

\begin{lemma}\label{lemma:perp1}
Let $\mu \in \F_q$. A generator $g$ of $\cW_\mu$ has at least one point in common with $\ell$ if and only if $g \cap \Delta$ is at least an $(n-1)$-space. 
\end{lemma}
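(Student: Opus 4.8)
The plan is to convert both sides of the claimed equivalence into a single dimension count, using the polarity $\perp_\mu$ as a duality between $\ell$ and $\Delta$. Two ingredients are already at hand: since $g$ is a generator of $\cW_\mu$ it is a maximal totally isotropic subspace, so $g^{\perp_\mu}=g$; and since $\ell^{\perp_\mu}=\Delta$ and $\perp_\mu$ is an involution, also $\Delta^{\perp_\mu}=\ell$.

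First I would dualize the intersection $g\cap\Delta$. Applying the standard rule $(A\cap B)^{\perp_\mu}=A^{\perp_\mu}+B^{\perp_\mu}$, valid because the alternating form defining $\perp_\mu$ is non-degenerate, to $A=g$ and $B=\Delta$ gives
\[
(g\cap\Delta)^{\perp_\mu}=g^{\perp_\mu}+\Delta^{\perp_\mu}=g+\ell .
\]
Next comes the bookkeeping. In $\PG(2n+1,q)$ a non-degenerate polarity sends a $d$-space to a $(2n-d)$-space, so $\dim(g\cap\Delta)+\dim(g+\ell)=2n$. Feeding in Grassmann's identity $\dim(g+\ell)=\dim g+\dim\ell-\dim(g\cap\ell)=n+1-\dim(g\cap\ell)$, I obtain the single clean relation
\[
\dim(g\cap\Delta)=(n-1)+\dim(g\cap\ell).
\]

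From this the lemma is immediate: $g$ meets $\ell$ in at least a point precisely when $\dim(g\cap\ell)\ge 0$, that is, precisely when $\dim(g\cap\Delta)\ge n-1$, which is the assertion. (The same relation also records the boundary behaviour: $g\cap\ell=\emptyset$ forces $\dim(g\cap\Delta)=n-2$, while $\ell\subseteq g$ forces $g\subseteq\Delta$.) I do not expect a genuine obstacle here; the only points demanding care are that the duality rule and the $d\mapsto 2n-d$ dimension formula both rely on the non-degeneracy of $\perp_\mu$ and on the maximality $g^{\perp_\mu}=g$ of generators, and that the projective-dimension conventions — in particular the value $-1$ for an empty intersection — be kept consistent throughout the count.
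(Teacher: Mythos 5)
Your proof is correct and rests on essentially the same ingredients as the paper's: the self-duality $g^{\perp_\mu}=g$ of generators, the exchange of $\ell$ and $\Delta$ under $\perp_\mu$, and the duality rule for intersections — you dualize $g\cap\Delta$ to get $g+\ell$, while the paper dualizes $g\cap\ell$ to get $\langle g,\Delta\rangle=(g\cap\ell)^{\perp_\mu}$, which is the same identity read in the opposite direction. The only difference is presentational: your single relation $\dim(g\cap\Delta)=(n-1)+\dim(g\cap\ell)$ covers all cases at once, whereas the paper argues the cases $\ell\subseteq g$ and $|g\cap\ell|=1$ separately.
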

\begin{proof}
Let $g$ be a generator of $\cW_\mu$. The line $\ell$ is contained in $g$ if and only if $g \subset \ell^{\perp_\mu}$, with $\ell^{\perp_\mu} = \Delta$, whereas $g \cap \ell$ is a point if and only if $\langle g, \ell^{\perp_\mu} \rangle = \langle g, \Delta \rangle$ is the hyperplane $(g \cap \ell)^{\perp_\mu}$ and hence $g \cap \Delta$ is an $(n-1)$-space.
\end{proof}

\begin{lemma}\label{lemma:perp2}
Let $\mu \in \F_q \setminus \{0\}$. If $g$ is a generator of $\cW_0$, then $g$ is a generator of $\cW_\mu$ if and only if $g \cap \Delta$ is at least an $(n-1)$-space.
\end{lemma}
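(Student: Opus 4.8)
The plan is to reduce the statement to the vanishing of a single auxiliary bilinear form supported on the two coordinates $X_{2n},X_{2n+1}$ that distinguish $J_\mu$ from $J_0$. Writing $B_\nu(x,y)=x^t J_\nu y$ for the bilinear form attached to $\perp_\nu$, one reads off the Gram matrices that $J_\mu = J_0 + \mu\left(E_{2n,2n+1}+E_{2n+1,2n}\right)$, where $E_{ij}$ denotes the matrix unit. Hence for all $x,y$
\begin{align*}
B_\mu(x,y)=B_0(x,y)+\mu\,\beta(x,y),\qquad \beta(x,y):=x_{2n}y_{2n+1}+x_{2n+1}y_{2n}.
\end{align*}
First I would note that, being an $n$-space, $g$ is a generator of $\cW_\mu$ exactly when it is totally isotropic for $\perp_\mu$, since every maximal totally isotropic subspace of $\cW(2n+1,q)$ is an $n$-space. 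As $g$ is a generator of $\cW_0$ we have $B_0|_{g\times g}\equiv 0$, so the identity above gives $B_\mu|_{g\times g}=\mu\,\beta|_{g\times g}$. Because $\mu\neq 0$, this shows that \emph{$g$ is a generator of $\cW_\mu$ if and only if $\beta$ vanishes identically on $g$}.

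Next I would analyse $\beta$ itself. It is a symmetric (hence, as $q$ is even, alternating) bilinear form whose radical is precisely $\{v:v_{2n}=v_{2n+1}=0\}=\Delta$. Therefore $\beta$ factors through the quotient $V/\langle\Delta\rangle\cong\F_q^2$, where it induces a nondegenerate alternating form $\bar\beta$, with $\beta(x,y)=\bar\beta(\bar x,\bar y)$ for $\bar x$ the image of $x$. Consequently $\beta|_{g\times g}\equiv 0$ if and only if the image $\bar g$ of $g$ in $V/\langle\Delta\rangle$ is totally isotropic for $\bar\beta$. Since $\bar\beta$ is a nondegenerate alternating form on a $2$-dimensional space, its maximal totally isotropic subspaces have vector dimension $1$; thus $\bar g$ is totally isotropic precisely when $\dim\bar g\le 1$, i.e. when $\dim g-\dim(g\cap\Delta)\le 1$.

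Finally I would translate back: the vector dimension of $g$ is $n+1$, so the last inequality reads $\dim(g\cap\Delta)\ge n$, that is, $g\cap\Delta$ is at least an $(n-1)$-space projectively. Combining this with the reduction of the first paragraph yields the claim. I do not expect a genuine obstacle here: once the bookkeeping identity $J_\mu=J_0+\mu(E_{2n,2n+1}+E_{2n+1,2n})$ is recorded and its radical is identified with $\Delta$, the only substantive input is the Witt-index bound for $\bar\beta$, which is immediate because $\bar\beta$ lives on a $2$-space. The point requiring the most care is simply verifying that passing to $\cW_\mu$ changes nothing on a $\perp_0$-totally isotropic $g$ except through $\beta$. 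As an alternative one may route the argument through Lemma~\ref{commute}: for any generator $g$ of $\cW_0$ one has $g^{\perp_\mu}=g^{\eta}$, so being a generator of $\cW_\mu$ is equivalent to $\eta$-invariance of $g$, and the computation above identifies exactly those $g$ fixed by the elation $\eta$ with axis $\Delta$ and centre $\ell$.
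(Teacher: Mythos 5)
Your proof is correct, and it takes a genuinely different route from the paper's. The paper argues synthetically from its preceding lemmas: for the ``if'' direction it invokes Lemma~\ref{lemma:perp} (subspaces of $\Delta$ have the same polar image under $\perp_0$ and $\perp_\mu$), treating separately the cases $g \subset \Delta$ and $g \cap \Delta$ an $(n-1)$-space; for the ``only if'' direction it notes via Lemma~\ref{lemma:perp1} that a generator of $\cW_0$ meeting $\Delta$ in an $(n-2)$-space misses the line $\ell$, whence $g^{\perp_\mu} = g^{\eta} \neq g$, since $\eta$ is an elation with center $\ell$. You instead isolate the difference of the two forms, $J_\mu = J_0 + \mu\left(E_{2n,2n+1}+E_{2n+1,2n}\right)$, observe that on a $\perp_0$-totally isotropic $g$ the form $B_\mu$ restricts to $\mu\beta$, and reduce everything to the vanishing of $\beta$ on $g$, which you settle by passing to the $2$-dimensional quotient of $V$ by the radical of $\beta$, this radical being exactly the vector space underlying $\Delta$. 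Your argument is self-contained (it uses none of Lemmas~\ref{commute}, \ref{lemma:perp}, \ref{lemma:perp1}), handles both directions at once with no case split, and makes the numerical threshold transparent: the defect form lives on a $2$-space, so it can fail to vanish on $g$ only when $g$ surjects onto the quotient. What the paper's route buys is economy in context --- the machinery of $\ell$, $\Delta$ and the elation $\eta$ is needed anyway for Lemma~\ref{lemma:perp3} and Proposition~\ref{prop:no_disjoint} --- together with the geometric picture of failure as non-invariance under $\eta$, which your closing remark correctly identifies as an equivalent formulation. One small repair: your parenthetical ``symmetric, hence alternating since $q$ is even'' is not a valid implication in characteristic $2$ (the form $x_1y_1$ is symmetric but not alternating); what makes $\beta$ alternating is simply that its Gram matrix has zero diagonal, so $\beta(x,x) = 2x_{2n}x_{2n+1} = 0$. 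This slip does not affect the correctness of the proof.
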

\begin{proof}
Let $g$ be a generator of $\cW_0$, i.e. $g = g^{\perp_0}$. If $g \subset \Delta$, then $g = g^{\perp_0} = g^{\perp_\mu}$ by Lemma~\ref{lemma:perp}. If $g \cap \Delta$ is an $(n-1)$-space, then $(g \cap \Delta)^{\perp_\mu} = (g \cap \Delta)^{\perp_0}$ by Lemma~\ref{lemma:perp}. Hence the $q+1$ generators of $\cW_\mu$ through $g \cap \Delta$ are the same as those of $\cW_0$. Since $g$ passes through $g \cap \Delta$, we have that $g = g^{\perp_\mu}$. 

Vice versa if $g$ is a generator of $\cW_0$ such that $g \cap \Delta$ is an $(n-2)$-space, then $|g \cap \ell| = 0$ by Lemma~\ref{lemma:perp1} and hence $g^{\perp_\mu} = g^\eta \ne g$.
\end{proof}

\begin{lemma}\label{lemma:perp3}
Let $\mu \in \F_q \setminus \{0\}$. If $g$ is a generator of $\cW_0$ and $g \cap \Delta$ is an $(n-2)$-space, then $g \cap \cQ_\mu$ is a cone having as vertex an $(n-3)$-space and as base a non-degenerate conic.
\end{lemma}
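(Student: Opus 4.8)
The plan is to restrict the quadratic form of $\cQ_\mu$ to $g$ and to read off the type of $g\cap\cQ_\mu$ from the polar form of the restriction. Write $Q_0$ for the quadratic form of $\cQ_0$ and $E=X_{2n}^2+X_{2n}X_{2n+1}+\delta X_{2n+1}^2$ for that of $\cQ_\infty$, so that $\cQ_\mu$ has quadratic form $Q_\mu=Q_0+\mu E$. Since $g\cap\Delta$ is an $(n-2)$-space, the linear forms $u:=X_{2n}|_g$ and $v:=X_{2n+1}|_g$ are linearly independent on $g$ and $g\cap\Delta=\{u=v=0\}$. By Lemma~\ref{lemma:perp1} the hypothesis also gives $g\cap\ell=\emptyset$ (recall $\ell\subseteq\Delta$), which will be used below.

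First I would analyse the polar form on $g$. As $g$ is a generator of $\cW_0$ it is totally isotropic for $\perp_0$, hence the polar form of $Q_0|_g$ vanishes; over the perfect field $\F_q$ this forces $Q_0|_g=L^2$ for a unique linear form $L$ on $g$. Moreover $L\neq 0$, because a generator of $\cW_0$ has projective dimension $n$ while the maximal totally singular subspaces of the elliptic quadric $\cQ_0$ have projective dimension $n-1$, so $g$ is not totally singular for $\cQ_0$. Thus $Q_\mu|_g=L^2+\mu(u^2+uv+\delta v^2)$, and since we are in characteristic $2$ the polar form of $Q_\mu|_g$ equals $\mu$ times the alternating form $(X,Y)\mapsto u(X)v(Y)+u(Y)v(X)$. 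As $\mu\neq 0$ this form has rank $2$ and radical exactly $g\cap\Delta$. Because $E$ involves only $X_{2n},X_{2n+1}$, it vanishes on $\Delta$, so $Q_\mu$ restricts on $g\cap\Delta$ to $Q_0|_{g\cap\Delta}=(L|_{g\cap\Delta})^2$.

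The crux is to show that $L|_{g\cap\Delta}\neq 0$, i.e.\ that the $(n-2)$-space $g\cap\Delta$ is not totally singular for $\cQ_0$. Recall that $\cQ_0\cap\Delta$, which is the base locus of the pencil, is the cone with vertex $\ell$ and base an elliptic quadric $\cQ^-(2n-3,q)$; being a cone with vertex $\ell$, the form $Q_0|_\Delta$ factors through the projection $\rho$ from $\ell$ onto a complementary $(2n-3)$-space carrying the base. Suppose for contradiction that $g\cap\Delta$ were totally singular for $\cQ_0$. Since $g\cap\ell=\emptyset$, the projection $\rho$ is injective on $g\cap\Delta$, so $\rho(g\cap\Delta)$ would be a totally singular subspace of $\cQ^-(2n-3,q)$ of projective dimension $n-2$. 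This is impossible, since $\cQ^-(2n-3,q)$ has Witt index $n-2$, so its maximal totally singular subspaces have projective dimension $n-3$. Establishing this non-singularity through the Witt-index bound on the base is the main obstacle of the proof.

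Finally I would assemble the conclusion. The form $Q_\mu|_g$ has polar form of rank $2$ with radical $R:=g\cap\Delta$, and on $R$ it equals the nonzero square $(L|_R)^2$. Hence the singular radical $\{Q_\mu|_g=0\}\cap R=\ker(L|_R)$ is an $(n-3)$-space, which is the vertex of $g\cap\cQ_\mu$. Passing to the quotient of $g$ by this vertex, $Q_\mu|_g$ induces a quadratic form in a plane $\PG(2,q)$ whose polar form has rank $2$ and which is nonzero on its one-dimensional radical; such a form defines a non-degenerate conic. Therefore $g\cap\cQ_\mu$ is a cone with vertex an $(n-3)$-space and base a non-degenerate conic, as claimed.
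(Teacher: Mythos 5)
Your proof is correct, but it takes a genuinely different route from the paper's. The paper argues synthetically: it first shows that $g \cap \Delta \cap \cQ_\mu$ is an $(n-3)$-space by viewing $g \cap \Delta$ as a generator of the symplectic polar space $\cW(2n-3,q)$ induced on a complement $\Delta'$ of $\ell$ in $\Delta$, and using that generators of $\cW(2n-3,q)$ meet the polarizing elliptic quadric $\cQ^-(2n-3,q)$ in generators of that quadric; it then identifies $(g\cap\Delta)^{\perp_0}\cap\cQ_\mu$ as a cone over a parabolic quadric $\cQ(4,q)$, cuts with a suitable $4$-space $\Lambda$ so that $\Lambda\cap g$ is a plane through the nucleus of $\Lambda\cap\cQ_\mu$, concludes $|\Lambda\cap g\cap\cQ_\mu|=q+1$, and finally rules out that these $q+1$ points form a line by invoking Lemma~\ref{lemma:perp2}. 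You instead work directly with the quadratic forms in characteristic $2$: writing $Q_0|_g=L^2$ (legitimate, since $g$ is totally isotropic for the polar form of $Q_0$ and $\F_q$ is perfect), you get that the polar form of $Q_\mu|_g$ has rank $2$ with radical $g\cap\Delta$, and the whole lemma reduces to the nonvanishing of $L$ on $g\cap\Delta$, which you obtain from $g\cap\ell=\emptyset$ and the Witt index of the base $\cQ^-(2n-3,q)$ of the cone $\Delta\cap\cQ_0$. Both proofs thus hinge on the same geometric input at the crux (the cone structure of $\Delta\cap\cQ_\mu$ with vertex $\ell$, plus Lemma~\ref{lemma:perp1}), but your version needs only the Witt-index bound rather than the polarizing-quadric fact, and it dispenses entirely with Lemma~\ref{lemma:perp2}, the cone-over-$\cQ(4,q)$ decomposition and the nucleus argument: the exact vertex dimension $n-3$ and the non-degeneracy of the base conic both fall out at once from the rank-$2$ polar form being nonzero on its radical. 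What the paper's approach buys is that it stays within the synthetic framework of the surrounding lemmas and avoids coordinate computations; what yours buys is self-containedness and a cleaner identification of the vertex as $g\cap\Delta\cap\cQ_\mu=\ker\bigl(L|_{g\cap\Delta}\bigr)$.
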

\begin{proof}
Let $g$ be a generator of $\cW_0$. If $g \cap \Delta$ is an $(n-2)$-space, then $|g \cap \ell| = 0$ by Lemma~\ref{lemma:perp1}. We claim that $g \cap \Delta \cap \cQ_\mu$ is an $(n-3)$-space. Let $\Delta'$ be a $(2n-3)$-space of $\Delta$ containing $g \cap \Delta$ and disjoint from $\ell$. Recall that $\Delta \cap \cQ_{\mu}$ is a cone having as vertex the line $\ell$ and as base a $\cQ^-(2n-3, q) = \cQ_\mu \cap \Delta'$. Let $\cW(2n-3, q)$ be the symplectic polar space of $\Delta'$ associated with $\cQ^-(2n-3, q)$. In other words $\cW(2n-3, q)$ is the symplectic polar space induced by each of the $\cW_\mu$ on $\Delta'$. Since $g \cap \Delta \subseteq (g \cap \Delta)^{\perp_0} = (g \cap \Delta)^{\perp_{\mu}}$ and $g \cap \Delta$ is an $(n-2)$-space, it follows that $g \cap \Delta$ is a generator of $\cW(2n-3, q)$. On the other hand, every generator of $\cW(2n-3, q)$ meets $\cQ^-(2n-3, q)$ in an $(n-3)$-space. Hence $g \cap \Delta \cap \cQ_\mu = g \cap \Delta \cap \cQ^-(2n-3, q)$ is an $(n-3)$-space, as claimed. 

Note that $(g \cap \Delta)^{\perp_0} = (g \cap \Delta)^{\perp_\mu}$ is an $(n+2)$-space. Moreover, $g \cap \Delta \simeq \PG(n-2, q)$ and $g \cap \Delta \cap \cQ_\mu \simeq \PG(n-3, q)$ imply that $(g \cap \Delta)^{\perp_0} \cap \cQ_\mu = (g \cap \Delta)^{\perp_\mu} \cap \cQ_\mu$ is a cone having as vertex $g \cap \Delta \cap \cQ_\mu$ and as base a parabolic quadric $\cQ(4, q)$, see \cite[Theorem 1.51]{HT}. Let $\Lambda$ be a $4$-space contained in $(g \cap \Delta)^{\perp_0}$ and disjoint from $g \cap \Delta \cap \cQ_\mu$. Then, necessarily, by dimensional considerations, $\Lambda \cap g \cap \Delta$ is a point and $\Lambda \cap g$ is a plane. In particular, $\Lambda \cap \cQ_{\mu}$ is a $\cQ(4, q)$ having as nucleus the point $\Lambda \cap g \cap \Delta$. Of course $\Lambda \cap g \cap \Delta \subset \Lambda \cap g$. Hence $|\Lambda \cap g \cap \cQ_\mu| = q+1$. It follows that $g$ intersects $\cQ_\mu$ in a cone having as vertex the $(n-3)$-space $g \cap \Delta \cap \cQ_\mu$ and as base the $q+1$ points of $\Lambda \cap g \cap \cQ_\mu$. Observe that $\Lambda \cap g \cap \cQ_\mu$ is not a line, otherwise $g \cap \cQ_\mu$ would be an $(n-1)$-space and $g$ a generator of $\cW_\mu$, contradicting Lemma~\ref{lemma:perp2}. Therefore $\Lambda \cap g \cap \cQ_\mu$ is a non-degenerate conic, as stated.
\end{proof}

\begin{theorem}\label{first}
$\cQ_{\mu}$, $\mu \in \F_q \setminus \{0\}$ is a $\left(\frac{q^n-1}{q-1}\right)$-ovoid of $\cW_0$.
\end{theorem}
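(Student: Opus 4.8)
The plan is to verify the defining incidence property directly, showing that every generator $g$ of $\cW_0$ meets $\cQ_\mu$ in exactly $\frac{q^n-1}{q-1}$ points. The starting observation is that $\cQ_\mu$ polarizes to $\cW_\mu$, so by the remark in the introduction it is already a $\frac{q^n-1}{q-1}$-ovoid of $\cW_\mu$; the only difficulty is that a generator $g$ of $\cW_0$ need not be a generator of $\cW_\mu$. Since $g$ is an $n$-space and $\Delta$ a $(2n-1)$-space of $\PG(2n+1, q)$, a dimension count gives $\dim(g \cap \Delta) \ge n-2$, so I would split the argument into the two cases $\dim(g \cap \Delta) \ge n-1$ and $\dim(g \cap \Delta) = n-2$.

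In the first case Lemma~\ref{lemma:perp2} shows that $g$ is simultaneously a generator of $\cW_0$ and of $\cW_\mu$. Since $\cQ_\mu$ is a $\frac{q^n-1}{q-1}$-ovoid of $\cW_\mu$, one immediately obtains $|g \cap \cQ_\mu| = \frac{q^n-1}{q-1}$, as wanted.

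The genuine content lies in the second case, where Lemma~\ref{lemma:perp3} already supplies the crucial structural fact: when $g \cap \Delta$ is an $(n-2)$-space, $g \cap \cQ_\mu$ is a cone with vertex an $(n-3)$-space and base a non-degenerate conic. It then remains only to count the points of such a cone. The vertex $\PG(n-3, q)$ contributes $\frac{q^{n-2}-1}{q-1}$ points, while each of the $q+1$ points of the conic spans with the vertex an $(n-2)$-space; deleting the common vertex, each such space contributes $q^{n-2}$ new points, and these sets are pairwise disjoint because the conic lies in a space skew to the vertex. Hence
\begin{align*}
|g \cap \cQ_\mu| = \frac{q^{n-2}-1}{q-1} + (q+1)\, q^{n-2} = \frac{q^{n-2}-1}{q-1} + q^{n-1} + q^{n-2} = \frac{q^n-1}{q-1},
\end{align*}
which coincides with the value found in the first case.

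Combining the two cases would show that $\cQ_\mu$ meets every generator of $\cW_0$ in exactly $\frac{q^n-1}{q-1}$ points, which is precisely the definition of a $\frac{q^n-1}{q-1}$-ovoid. I expect the main obstacle to be entirely absorbed into the preparatory lemmas — specifically establishing the cone structure of $g \cap \cQ_\mu$ in the generic case (Lemma~\ref{lemma:perp3}), via the symplectic space $\cW(2n-3, q)$ induced on a suitable $(2n-3)$-space $\Delta'$. Once that is granted, the theorem reduces to the elementary point count above together with the clean dichotomy furnished by Lemma~\ref{lemma:perp2}.
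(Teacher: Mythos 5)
Your proposal is correct and follows essentially the same route as the paper: the same dichotomy on $\dim(g \cap \Delta)$ (at least $n-1$ versus exactly $n-2$), resolved by Lemma~\ref{lemma:perp2} and Lemma~\ref{lemma:perp3} respectively, followed by the same cone point count $\frac{q^{n-2}-1}{q-1} + (q+1)q^{n-2} = \frac{q^n-1}{q-1}$. The only cosmetic difference is that in the first case you invoke the $m$-ovoid property of $\cQ_\mu$ with respect to $\cW_\mu$, while the paper states directly that $g \cap \cQ_\mu$ is a generator of $\cQ_\mu$, i.e.\ an $(n-1)$-space; these amount to the same fact.
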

\begin{proof}
Let $g$ be a generator of $\cW_0$. If $g \cap \Delta$ is at least an $(n-1)$-space, then $g$ is a generator of $\cW_\mu$ by Lemma~\ref{lemma:perp2}. Hence $g \cap \cQ_\mu$ is a generator of $\cQ_\mu$, i.e., an $(n-1)$-space. If $g \cap \Delta$ is an $(n-2)$-space, then by Lemma~\ref{lemma:perp3} $g$ intersects $\cQ_\mu$ in a cone having as vertex the $(n-3)$-space $g \cap \Delta \cap \cQ_\mu$ and as base the $q+1$ points of $\Lambda \cap g \cap \cQ_\mu$. This gives $|g \cap \cQ_\mu| = (q+1) q^{n-2} + \frac{q^{n-2} - 1}{q-1} = \frac{q^n-1}{q-1}$. 
\end{proof}

The next results will be needed in Section \ref{iso}, where the isomorphism issue will be discussed.

\begin{prop}\label{prop:no_disjoint}
Let $\mu \in \F_q \setminus \{0\}$. If $r$ is a totally isotropic line with respect to $\perp_0$, then $r$ is totally isotropic with respect to $\perp_\mu$ if and only if $\vert r \cap \Delta \vert \geq 1$.
\end{prop}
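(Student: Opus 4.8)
The plan is to reduce everything to a direct computation with the two alternating forms, exploiting that their Gram matrices differ only in the two entries indexed by the coordinates $X_{2n}, X_{2n+1}$ that cut out $\Delta$. Write $r = \langle A, B\rangle$ with $A = [\mathbf a]$, $B = [\mathbf b]$, and let $\langle\,\cdot\,,\,\cdot\,\rangle_0$ and $\langle\,\cdot\,,\,\cdot\,\rangle_\mu$ denote the alternating forms with Gram matrices $J_0$ and $J_\mu$. Since these forms are alternating, the line $r$ is totally isotropic with respect to $\perp_0$ (respectively $\perp_\mu$) precisely when $\langle \mathbf a, \mathbf b\rangle_0 = 0$ (respectively $\langle \mathbf a, \mathbf b\rangle_\mu = 0$), the diagonal contributions vanishing automatically.

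First I would record that $J_\mu - J_0$ has the single entry $\mu$ in the two positions $(2n,2n+1)$ and $(2n+1,2n)$, so that
\[
\langle \mathbf a, \mathbf b\rangle_\mu = \langle \mathbf a, \mathbf b\rangle_0 + \mu\,(a_{2n}b_{2n+1} + a_{2n+1}b_{2n}).
\]
Assuming $r$ totally isotropic for $\perp_0$, the first summand vanishes, and since $\mu \neq 0$ one gets that $r$ is totally isotropic for $\perp_\mu$ if and only if
\[
a_{2n}b_{2n+1} + a_{2n+1}b_{2n} = 0 .
\]

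The crux is to identify this last condition geometrically. In characteristic two $a_{2n}b_{2n+1}+a_{2n+1}b_{2n}$ equals $\det\begin{pmatrix} a_{2n} & a_{2n+1}\\ b_{2n} & b_{2n+1}\end{pmatrix}$, so the condition says exactly that the vectors $(a_{2n}, a_{2n+1})$ and $(b_{2n}, b_{2n+1})$ are linearly dependent. On the other hand a point $[\alpha\mathbf a + \beta\mathbf b]$ of $r$ lies on $\Delta\colon X_{2n} = X_{2n+1} = 0$ if and only if $(\alpha,\beta)$ is a nonzero solution of the homogeneous system with matrix $\begin{pmatrix} a_{2n} & b_{2n}\\ a_{2n+1} & b_{2n+1}\end{pmatrix}$, which admits such a solution if and only if this matrix is singular, that is, if and only if the same determinant vanishes. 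Hence $a_{2n}b_{2n+1}+a_{2n+1}b_{2n} = 0$ if and only if $\vert r \cap \Delta\vert \ge 1$, which yields the claim.

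Conceptually, the form with Gram matrix $J_\mu - J_0$ is the pullback to $V$ of the nondegenerate alternating form carried by the coordinates $X_{2n}, X_{2n+1}$ on the two-dimensional quotient of $V$ by the subspace underlying $\Delta$, whose radical is exactly $\Delta$; the argument above is then nothing but the fact that two vectors are orthogonal for a nondegenerate alternating form on a two-dimensional space if and only if they are linearly dependent. I expect the only genuinely delicate point to be the characteristic-two coincidence that makes the symmetric expression appearing in $\langle \mathbf a, \mathbf b\rangle_\mu - \langle \mathbf a, \mathbf b\rangle_0$ agree with the determinant governing the incidence $r \cap \Delta$; once this is observed, the degenerate configurations (for instance $r \subseteq \Delta$, consistent with Lemma~\ref{lemma:perp}) need no separate treatment.
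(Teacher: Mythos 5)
Your proof is correct, but it takes a genuinely different route from the paper's. You argue by direct computation with the two Gram matrices: since $J_\mu$ and $J_0$ differ only in the entries $(2n,2n+1)$ and $(2n+1,2n)$, total isotropy of $r=\langle[\mathbf a],[\mathbf b]\rangle$ for $\perp_0$ reduces the question to the vanishing of $\mu\,(a_{2n}b_{2n+1}+a_{2n+1}b_{2n})$, and in characteristic two this symmetric expression is the determinant whose vanishing is exactly the condition that the system $X_{2n}=X_{2n+1}=0$ has a nonzero solution on $r$, i.e.\ that $r$ meets $\Delta$. This settles both implications at once, is entirely self-contained (it uses nothing beyond the definition of $J_0$ and $J_\mu$), and explains structurally why $\Delta$ is the relevant subspace: the difference form $J_\mu - J_0$ is degenerate with radical the subspace underlying $\Delta$. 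The paper instead argues synthetically and asymmetrically: the ``if'' direction follows from Lemma~\ref{lemma:perp} (a point $P\in r\cap\Delta$ satisfies $P^{\perp_0}=P^{\perp_\mu}$, so $r\subset P^{\perp_\mu}$), while the ``only if'' direction is by contradiction, taking a generator $g$ of $\cW_0$ inside $\Delta$, showing that $\langle r, r^{\perp_0}\cap g\rangle$ would be a common generator of $\cW_0$ and $\cW_\mu$ meeting $\Delta$ in an $(n-2)$-space, against Lemma~\ref{lemma:perp2}. What each buys: your computation is shorter in logical dependencies and makes the characteristic-two mechanism explicit, whereas the paper's argument is coordinate-free, recycles the machinery already established in the section, and stays in the synthetic style used throughout (which is also what the subsequent results, e.g.\ Corollary~\ref{cor:lines}, build on).
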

\begin{proof}
Let $r$ be a totally isotropic line w.r.t. $\perp_0$. If $|r \cap \Delta| \ge 1$, let $P \in r \cap \Delta$; by Lemma~\ref{lemma:perp}, $P^{\perp_0}=P^{\perp_\mu}$  and so $P \in r \subset P^{\perp_\mu}$, making $r$ totally isotropic w.r.t. $\perp_\mu$. Vice versa if $r$ is totally isotropic with respect to $\perp_\mu$, we want to prove that it cannot be disjoint from $\Delta$. Assume by contradiction that $|r \cap \Delta| = 0$ and let $g$ be a generator of $\cW_0$ contained in $\Delta$. By Lemma~\ref{lemma:perp2}, $g$ is also a generator of $\cW_\mu$. The $(n-2)$-space $r^{\perp_0} \cap g$ is contained in $\Delta$ and therefore $\left(r^{\perp_0} \cap g\right)^{\perp_0}=\left(r^{\perp_0} \cap g\right)^{\perp_\mu}$. Hence  $r\subseteq \left(r^{\perp_0} \cap g\right)^{\perp_0}=\left(r^{\perp_0} \cap g\right)^{\perp_\mu}$. It follows that $\langle r, r^{\perp_0}\cap g \rangle$ is a generator of both $\cW_0$ and $\cW_\mu$. Moreover such a generator intersects $\Delta$ in an $(n-2)$-space, contradicting Lemma \ref{lemma:perp2}.
\end{proof}

\begin{cor}\label{cor:lines}
Let $\mu \in \F_q \setminus \{0\}$. A line of $\cW_0$ has $0$, $1$, $2$ or $q+1$ points in common with $\cQ_\mu$ and each case occurs.
\end{cor}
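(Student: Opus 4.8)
The plan is to split the statement into two independent assertions: that the only possible intersection numbers are $0,1,2,q+1$, which in fact holds for \emph{every} line of $\PG(2n+1,q)$, and that each of these four values is actually attained by some line of $\cW_0$. The first assertion requires no symplectic input. Restricting the quadratic form defining $\cQ_\mu$ to a line $r = \langle P, R\rangle$ and evaluating at a generic point $\lambda P + \rho R$ yields a binary quadratic form in $(\lambda,\rho)$. If this form vanishes identically then $r \subset \cQ_\mu$ and $|r \cap \cQ_\mu| = q+1$; otherwise it is a nonzero homogeneous quadratic, which has at most two projective zeros, so $|r \cap \cQ_\mu| \in \{0,1,2\}$. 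This disposes of the ``only these values'' direction at once.

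For the value $q+1$, I would take a generator $g$ of $\cW_0$ with $g \cap \Delta$ at least an $(n-1)$-space, for instance a generator through $\ell$, which lies in $\ell^{\perp_0} = \Delta$. By Lemma~\ref{lemma:perp2}, $g$ is then also a generator of $\cW_\mu$, and hence $g \cap \cQ_\mu$ is an $(n-1)$-space. Since $n \ge 2$ this subspace contains a line $r$; being contained in the totally isotropic $n$-space $g$, the line $r$ is a line of $\cW_0$, and it lies on $\cQ_\mu$, so $|r \cap \cQ_\mu| = q+1$.

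For the values $0,1,2$, I would instead pick a generator $g$ of $\cW_0$ disjoint from $\ell$. Then $g \cap \ell = \emptyset$, so by Lemma~\ref{lemma:perp1} and the dimension formula (an $n$-space meets the codimension-$2$ subspace $\Delta$ in dimension at least $n-2$) the intersection $g \cap \Delta$ is exactly an $(n-2)$-space. By Lemma~\ref{lemma:perp3}, $g \cap \cQ_\mu$ is a cone with vertex an $(n-3)$-space $V$ and base a non-degenerate conic $\cC$ lying in a plane $\pi$ of $g$ with $V \cap \pi = \emptyset$; in particular $(g \cap \cQ_\mu) \cap \pi = \cC$. Because $q$ is even, $\cC$ has a nucleus, and the lines of $\pi$ meet $\cC$ in $0$ points (external lines), $1$ point (the tangent lines, namely those through the nucleus) or $2$ points (secant lines), with all three types occurring. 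Every such line is contained in the totally isotropic space $g$, hence is a line of $\cW_0$, and it meets $\cQ_\mu$ precisely where it meets $\cC$, thereby realizing the values $0$, $1$ and $2$.

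The steps I expect to need the most care are, first, justifying that generators of $\cW_0$ of \emph{both} $\Delta$-intersection types exist — generators through $\ell$ (lying in $\Delta$) for the value $q+1$, and generators disjoint from $\ell$ (meeting $\Delta$ in the generic dimension $n-2$) for the values $0,1,2$ — and second, reading off the line--conic incidences correctly in even characteristic via the nucleus, so that tangent, secant and external lines of $\pi$ indeed give exactly $1$, $2$ and $0$ points of $\cQ_\mu$. Both points are routine once the generic-position count and the structure of a conic in $\PG(2,q)$, $q$ even, are invoked, and together with the first paragraph they establish the corollary.
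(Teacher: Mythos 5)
Your proof is correct, but it takes a genuinely different route from the paper's. The paper deduces the corollary in two lines from Proposition~\ref{prop:no_disjoint}: a line $r$ of $\cW_0$ with $|r \cap \Delta| \geq 1$ is a line of $\cW_\mu$, hence is tangent to or contained in the polarizing quadric $\cQ_\mu$ ($1$ or $q+1$ points), while a line of $\cW_0$ disjoint from $\Delta$ is not a line of $\cW_\mu$, hence is secant or external ($0$ or $2$ points). This gives a finer dichotomy than the statement itself: the intersection number is correlated with whether the line meets $\Delta$. You instead obtain the restriction to $\{0,1,2,q+1\}$ from the universal fact that a quadric meets any line of $\PG(2n+1,q)$ in at most $2$ or in exactly $q+1$ points, and then realize each value explicitly: $q+1$ from a line inside a generator contained in $\Delta$ (via Lemma~\ref{lemma:perp2} and the fact that generators of $\cW_\mu$ meet $\cQ_\mu$ in $(n-1)$-spaces, also used in Theorem~\ref{first}), and $0,1,2$ from external, tangent and secant lines to the base conic of the cone of Lemma~\ref{lemma:perp3}, using that this cone meets the base plane exactly in the conic. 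What each approach buys: the paper's argument is shorter, piggybacks on the proposition proved immediately before, and tells you \emph{which} lines produce which intersection numbers; yours avoids Proposition~\ref{prop:no_disjoint} (and the fact that lines of $\cW_\mu$ meet $\cQ_\mu$ in $1$ or $q+1$ points) altogether, and it treats the clause ``each case occurs'' explicitly, a point the paper's proof leaves implicit. The two existence issues you flag are indeed routine: any generator of $\cW_0$ through $\ell$ lies in $\ell^{\perp_0} = \Delta$, and a generator opposite to such a generator is disjoint from $\ell$, hence meets $\Delta$ in exactly an $(n-2)$-space.
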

\begin{proof}
By Proposition~\ref{prop:no_disjoint}, a line $r$ of $\cW_0$ such that $\vert r \cap \Delta \vert \geq 1$ is a line of $\cW_\mu$ and hence has either $1$ or $q+1$ points in common with $\cQ_\mu$. On the other hand, if $r$ is disjoint from $\Delta$, then it is not a line of $\cW_\mu$ and therefore $r$ meets $\cQ_\mu$ in either $0$ or $2$ points.
\end{proof}

\begin{prop}\label{generators}
Let $\mu \in \F_q \setminus \{0\}$. There are exactly $(q^{n+1} + q^n + 1) \prod_{i = 1}^{n-2} (q^{n-i} + 1)$ generators of $\cQ_\mu$ that are totally isotropic with respect to $\perp_0$. \footnote{Here the product with indices from one to a non positive integer reads as one.}
\end{prop}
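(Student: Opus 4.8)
My plan is to reduce the statement to a clean containment criterion and then count by inclusion--exclusion. Write the quadratic form of $\cQ_\mu$ as $Q_\mu = Q_0 + \mu R$, where $R = X_{2n}^2 + X_{2n}X_{2n+1} + \delta X_{2n+1}^2$. Polarizing in characteristic two, the alternating forms attached to $\perp_0$ and $\perp_\mu$ differ by $\mu$ times the polar form $B_R$ of $R$, namely $B_R(v,w) = v_{2n}w_{2n+1} + v_{2n+1}w_{2n}$. Since a generator $g$ of $\cQ_\mu$ is totally singular, it is automatically totally isotropic for $\perp_\mu$; hence $g$ is totally isotropic for $\perp_0$ if and only if $B_R$ vanishes on $g$. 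But $B_R$ is built from the two coordinate functionals $X_{2n}$ and $X_{2n+1}$, and it vanishes on $g$ exactly when these functionals are linearly dependent on $g$, i.e. when $g$ is contained in one of the $q+1$ hyperplanes $\alpha X_{2n} + \beta X_{2n+1} = 0$ of the pencil through $\Delta$. This is the key reduction: a generator of $\cQ_\mu$ is $\perp_0$-isotropic precisely when it lies in at least one member of this pencil.

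Next I would identify the pencil geometrically. Because $\ell^{\perp_\mu} = \Delta$, a hyperplane $H \supseteq \Delta$ has pole $H^{\perp_\mu}$ lying in $\Delta^{\perp_\mu} = \ell$, so the $q+1$ hyperplanes through $\Delta$ are exactly the $P^{\perp_\mu}$ as $P$ ranges over the $q+1$ points of $\ell$; as $\ell \subset \cQ_\mu$, each $H = P^{\perp_\mu}$ is the tangent hyperplane of $\cQ_\mu$ at $P$. The section $\cQ_\mu \cap P^{\perp_\mu}$ is thus a cone with vertex $P$ over an elliptic quadric $\cQ^-(2n-1,q)$, whose generators are precisely the generators of $\cQ_\mu$ through $P$. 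Consequently the number of generators of $\cQ_\mu$ contained in a fixed pencil hyperplane equals the number of generators of $\cQ^-(2n-1,q)$, that is $\prod_{i=1}^{n-1}(q^{i+1}+1)$. Likewise the generators of $\cQ_\mu$ contained in the common intersection $\Delta$ are the generators of the cone $\cQ_\mu \cap \Delta$ with vertex $\ell$ and base $\cQ^-(2n-3,q)$; each has the form $\langle \ell, g'\rangle$, and there are $\prod_{i=1}^{n-2}(q^{i+1}+1)$ of them.

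Finally I would assemble the count by inclusion--exclusion over the $q+1$ pencil members. If a generator lies in two distinct members $P^{\perp_\mu}$ and $(P')^{\perp_\mu}$, then it lies in $\langle P, P'\rangle^{\perp_\mu} = \ell^{\perp_\mu} = \Delta$, hence in all $q+1$ members; conversely every generator inside $\Delta$ contains $\ell$ and so lies in each member. Thus the generators inside $\Delta$ are over-counted with multiplicity $q+1$ while every other $\perp_0$-isotropic generator is counted once, giving
\[
(q+1)\prod_{i=1}^{n-1}(q^{i+1}+1) - q\prod_{i=1}^{n-2}(q^{i+1}+1) = (q^{n+1}+q^n+1)\prod_{i=1}^{n-2}(q^{n-i}+1),
\]
where the last equality follows by factoring out $\prod_{i=1}^{n-2}(q^{i+1}+1)$ and simplifying $(q+1)(q^n+1) - q = q^{n+1}+q^n+1$. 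The main work lies in the first paragraph --- pinning down the containment criterion and checking that it is exactly membership in the pencil through $\Delta$ --- together with the bookkeeping of the generator-counting formulas for the elliptic quadrics (and the degenerate low-dimensional cases, where the empty base quadric and the empty-product convention of the footnote must be handled consistently).
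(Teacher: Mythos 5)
Your proof is correct, and it takes a genuinely different route from the paper's. The paper deduces the geometric position of a $\perp_0$-isotropic generator $\bar g$ of $\cQ_\mu$ from Lemmas~\ref{lemma:perp1}--\ref{lemma:perp3}: any generator $g$ of $\cW_0$ containing $\bar g$ cannot meet $\Delta$ in an $(n-2)$-space, hence is also a generator of $\cW_\mu$ with $g\cap\cQ_\mu=\bar g$, so $\bar g$ either lies in $\Delta$ or meets $\Delta$ in an $(n-2)$-space through a point of $\ell$; it then counts the second family by enumerating the $(n-2)$-spaces of the cone $\Delta\cap\cQ_\mu$ meeting $\ell$ in a point ($q^{n-1}+q^{n-2}$ in each cone generator) and multiplying by the $q^2$ generators of $\cQ_\mu$ through each one that leave $\Delta$. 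You instead obtain the membership criterion algebraically (from $Q_\mu=Q_0+\mu R$ the polar forms satisfy $B_0=B_\mu+\mu B_R$, so a generator of $\cQ_\mu$, being automatically $\perp_\mu$-isotropic, is $\perp_0$-isotropic if and only if $B_R$ vanishes on it, if and only if it lies in a hyperplane of the pencil through $\Delta$), identify the pencil members as the tangent hyperplanes $P^{\perp_\mu}$ with $P\in\ell$, count the generators inside each member as the generators of the base $\cQ^-(2n-1,q)$, and finish by inclusion--exclusion over the $q+1$ members. Each route has its merits. Yours is self-contained and, more importantly, is an equivalence from the outset: the paper's proof as written only establishes that $\perp_0$-isotropic generators lie in the two families and leaves implicit the converse (that every generator in those positions is indeed $\perp_0$-isotropic), which is what upgrades its count from an upper bound to an exact value; your $B_R$ criterion supplies precisely this missing direction. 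The paper's route, on the other hand, reuses lemmas already needed for Theorem~\ref{first}, and makes visible that every such generator meets $\ell$, the fact exploited in Corollary~\ref{cor:disjoint} --- though your version yields this too, since the generators inside $P^{\perp_\mu}$ all pass through $P$. Your closing algebra $(q+1)(q^n+1)-q=q^{n+1}+q^n+1$ is right, and the degenerate case $n=2$ (empty base quadric $\cQ^-(1,q)$ and empty products) works out under the footnote's convention.
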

\begin{proof}
If $\bar{g}$ is a generator of $\cQ_\mu$ that is totally isotropic with respect to $\perp_0$, then there is a generator $g$ of $\cW_0$ such that $\bar{g} \subset g$. Since $\bar{g}$ is an $(n-1)$-space, $g$ does not intersect $\Delta$ in exactly an $(n-2)$-space by Lemma~\ref{lemma:perp3}. Hence $g \cap \Delta$ is at least an $(n-1)$-space and $g$ is a generator of $\cW_\mu$ by Lemma~\ref{lemma:perp2}. Therefore $g \cap \cQ_\mu = \bar{g}$. It follows that either $\bar{g}$ is contained in $\Delta$ or it meets $\Delta$ in an $(n-2)$-space. In the former case $\bar{g}$ contains $\ell$ and we find 
\begin{align}
& \# (n-3)\mbox{-spaces of } \cQ^-(2n-3, q) = \prod_{i = 1}^{n-2}(q^{n-i} + 1) \label{genDelta}
\end{align}
$(n-1)$-spaces of $\Delta \cap \cQ_\mu$. Hence there are $\prod_{i = 1}^{n-2}(q^{n-i} + 1)$ generators of $\cQ_\mu$ contained in $\Delta$. In the latter case $\bar{g} \cap \ell$ is a point and $\langle \bar{g} \cap \Delta, \ell \rangle$ is an $(n-1)$-space of $\Delta \cap \cQ_\mu$. On the other hand, an $(n-1)$-space of $\Delta \cap \cQ_\mu$ (and that hence contains $\ell$) has precisely $q^{n-1} + q^{n-2}$ $(n-2)$-spaces of $\Delta \cap \cQ_\mu$ meeting the line $\ell$ in one point. Therefore, taking into account \eqref{genDelta}, there are precisely 
\[
\left( q^{n-1} + q^{n-2} \right) \times \# (n-3)\mbox{-spaces of } \cQ^-(2n-3, q)
\]
$(n-2)$-spaces of $\Delta \cap \cQ_\mu$ having exactly one point in common with $\ell$. Since through an $(n-2)$-space of $\Delta \cap \cQ_\mu$ intersecting $\ell$ in one point, there pass $q^2+1$ generators of $\cQ_\mu$ and exactly one of these is contained in $\Delta$, it turns out that there are 
\begin{align*}
& q^2 \times\left( q^{n-1} + q^{n-2} \right) \prod_{i = 1}^{n-2}(q^{n-i} + 1) = \left( q^{n+1} + q^n \right) \prod_{i = 1}^{n-2}(q^{n-i} + 1)
\end{align*}
generators of $\cQ_\mu$ meeting $\Delta$ in an $(n-2)$-space.   
\end{proof}

\begin{cor}\label{cor:disjoint}
Let $\mu \in \F_q \setminus \{0\}$. There are at most $q+1$ pairwise disjoint generators of $\cQ_\mu$ that are totally isotropic with respect to $\perp_0$.
\end{cor}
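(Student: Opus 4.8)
The plan is to exploit a fact that is already established inside the proof of Proposition~\ref{generators}: every generator of $\cQ_\mu$ that is totally isotropic with respect to $\perp_0$ necessarily meets the line $\ell$. Indeed, if $\bar g$ is such a generator, that argument shows $\bar g = g \cap \cQ_\mu$ for some generator $g$ of $\cW_0$ with $g \cap \Delta$ at least an $(n-1)$-space, and consequently either $\bar g \subseteq \Delta$, in which case $\bar g \supseteq \ell$, or $\bar g \cap \Delta$ is an $(n-2)$-space, in which case $\bar g \cap \ell$ is a single point. In either case $\bar g \cap \ell \neq \emptyset$. This incidence with the fixed line $\ell$ is the only structural input required.

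I would then finish by a pigeonhole argument on the $q+1$ points of $\ell$. Suppose $\bar g_1, \dots, \bar g_k$ are pairwise disjoint $\perp_0$-isotropic generators of $\cQ_\mu$. By the previous paragraph each $\bar g_i \cap \ell$ is a nonempty subset of $\ell$, and for $i \neq j$ the disjointness $\bar g_i \cap \bar g_j = \emptyset$ forces $(\bar g_i \cap \ell) \cap (\bar g_j \cap \ell) = \emptyset$. Hence the sets $\bar g_i \cap \ell$ are $k$ pairwise disjoint nonempty subsets of the point set of $\ell$, which has cardinality $q+1$. Therefore $k \le q+1$, which is exactly the stated bound.

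The hard part has already been done in Proposition~\ref{generators}, so there is essentially no obstacle left: the corollary reduces to noting that all these generators pass through $\ell$ and then counting on the $q+1$ points of $\ell$. The only configuration worth a second glance is the extremal one in which some $\bar g_i$ contains $\ell$ entirely; then $\bar g_i \cap \ell = \ell$ uses up all $q+1$ points, so no further generator can be disjoint from it, forcing $k = 1$. This is consistent with, and well within, the bound $q+1$, so the pigeonhole count absorbs it without any separate treatment.
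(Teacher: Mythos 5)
Your proof is correct and follows essentially the same route as the paper: the paper's own argument also extracts from the proof of Proposition~\ref{generators} that every $\perp_0$-isotropic generator of $\cQ_\mu$ meets $\ell$, and then concludes the bound $q+1$ from pairwise disjointness against the $q+1$ points of $\ell$. Your explicit pigeonhole phrasing and the remark about the case $\ell \subseteq \bar g$ merely spell out what the paper leaves implicit.
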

\begin{proof}
By the proof of Proposition~\ref{generators}, a generator of $\cQ_\mu$ which is totally isotropic with respect to $\perp_0$ has at least one point in common with $\ell$. Hence there are at most $q+1$ of these $(n-1)$-spaces.
\end{proof}

\subsection{A class of $(q+1)$-ovoids of $\cW(5, q)$ admitting $\PSL(2, q^2)$ as an automorphism group}\label{SecondConstr}

In this subsection we provide a class of $(q+1)$-ovoids of $\cW(5, q)$. In order to do that a preliminary description of the geometric setting is required. Let $\PG(5, q^2)$ be the $5$-dimensional projective space over $\F_{q^2}$ equipped with homogeneous projective coordinates $(X_1, X_2, X_3, X_4, X_5, X_6)$. The set of vectors
\begin{align*}
& \{(\alpha, \alpha^q, \delta_0, \beta, \beta^q, \delta_1) \colon \alpha, \beta \in \F_{q^2}, \delta_0, \delta_1 \in \F_q \}
\end{align*}
is an $\F_q$-vector space and hence the corresponding projective points are those of a Baer subgeometry $\Sigma$ of $\PG(5, q^2)$. Denote by $\tau$ the involutory collineation of $\PG(5, q^2)$ fixing pointwise $\Sigma$, that is
\begin{align*}
& \tau: X_1' = X_2^q, X_2' = X_1^q, X_3' = X_3, X_4' = X_5^q, X_5' = X_4^q, X_6' = X_6.
\end{align*}

Let $\bar{\Pi}$ be the hyperplane of $\PG(5, q^2)$ with equation $X_6 = 0$ and let $\bar{\cQ}$ be the parabolic quadric of $\bar{\Pi}$ given by 
\begin{align*}
X_3^2 + X_1 X_5 + X_2 X_4 = 0. 
\end{align*}
Thus $\Pi = \bar{\Pi} \cap \Sigma$ and $\cQ = \bar{\cQ} \cap \Sigma$ are a hyperplane and a parabolic quadric of $\Sigma$, respectively. In particular 
\begin{align*}
\cQ = \{(\alpha, \alpha^{q}, \sqrt{\alpha \beta^q + \alpha^q \beta}, \beta, \beta^q, 0) \colon \alpha, \beta \in \F_{q^2}, (\alpha, \beta) \ne (0,0)\}
\end{align*} 
and its nucleus is the point $N = (0,0,1,0,0,0)$. Fix an element $\w \in \F_{q^2} \setminus \F_q$ such that $\w + \w^q = 1$ and let $\cH(5, q^2)$ be the Hermitian variety of $\PG(5, q^2)$ defined by 
\begin{align*}
 \w X_1 X_4^q + \w^q X_1^q X_4 + \w^q X_1 X_6^q+ \w X_1^q X_6 + \w^q X_2 X_5^q + \w X_2^q X_5 + & \\
 \w X_2 X_6^q + \w^q X_2^q X_6 + X_3 X_6^q + X_3^q X_6 & = 0. 
\end{align*}
Since $\Sigma \subset \cH(5, q^2)$, the polarity induced by $\cH(5, q^2)$ on $\Sigma$ is symplectic \cite[Lemma 6.2]{CMPS}. Denote by $\perp$ and $\cW(5, q)$ the symplectic polarity and polar space induced by $\cH(5, q^2)$ on $\Sigma$. Thus $N^\perp = \Pi$. Let $G$ be the subgroup of $\PGL(6, q^2)$ generated by the matrices
\begin{align*}
& M_{a,b,c,d} = 
\begin{pmatrix}
a^2 & 0 & 0 & 0 & c^2 & \frac{c (a + c \w^q)}{\w} \\
0 & a^{2q} & 0 & c^{2q} & 0 & \frac{c^q (a^q + c^q \w)}{\w^q} \\ 
ab & a^q b^q & 1 & c^q d^q & cd & \frac{d (a + c \w^q)}{\w} + \frac{d^q (a^q + c^q \w)}{\w^q} + \frac{1}{\w^{q+1}} \\
0 & b^{2q} & 0 & d^{2q} & 0 & \frac{d^q (b^q + d^q \w) + \w}{\w^q} \\
b^2 & 0 & 0 & 0 & d^2 & \frac{d (b + d \w^q) + \w^q}{\w} \\
0 & 0 & 0 & 0 & 0 & 1 \\
\end{pmatrix}, & a, b, c, d \in \F_{q^2}, ad+bc = 1.
\end{align*} 
The map $\begin{pmatrix} a & b \\ c & d \end{pmatrix} \mapsto M_{a,b,c,d}$ is an isomorphism between  $G$ and $\PSL(2, q^2)$. Let 
\begin{align*}
& J = \begin{pmatrix}
0 & 0 & 0 & \w & 0 & \w^q \\
0 & 0 & 0 & 0 & \w^q & \w \\
0 & 0 & 0 & 0 & 0 & 1 \\
\w^q & 0 & 0 & 0 & 0 & 0 \\
0 & \w & 0 & 0 & 0 & 0 \\
\w & \w^q & 1 & 0 & 0 & 0 \\
\end{pmatrix}
\end{align*}
be the Gram matrix of the sesquilinear form defining $\cH(5, q^2)$. The group $G$ fixes the Hermitian variety $\cH(5, q^2)$, since $M_{a,b,c,d}^t J M_{a,b,c,d}^q = J$, for all $a,b,c,d \in \F_{q^2}$ with $ad+bc = 1$.  Similarly, some straightforward calculations show that $G$ stabilizes the parabolic quadric $\bar{\cQ}$ (see \cite[Lemma 1.5.23]{CD}). 

Let $\gamma \in \F_{q^2}$ be such that $X^2 + X + \gamma = 0$ is irreducible over $\F_{q^2}$ and consider the point 
\begin{align*}
S_\gamma = \left( \frac{\gamma}{\w}, \frac{\gamma^q}{\w^q}, \frac{\w^q \gamma}{\w} + \frac{\w \gamma^q}{\w^q}, 1, 1, 1\right) \in \Sigma. 
\end{align*}
Set $\cO = \cQ \cup S_\gamma^G$. We will prove that $\cO$ is a $(q+1)$-ovoid of $\cW(5, q)$.
 
The hyperplane 
\begin{align*}
S_\gamma^\perp: X_1 + X_2 + X_3 + \gamma^q X_4 + \gamma X_5 = 0
\end{align*}
of $\Sigma$ meets $\Pi$ in a solid.

\begin{lemma}\label{perp}
$\Pi \cap S_\gamma^\perp \cap \cQ$ is an elliptic quadric and $|S_\gamma^G| = q^2(q^2-1)$.  
\end{lemma}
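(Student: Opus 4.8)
The plan is to establish the two claims of Lemma~\ref{perp} separately, since they are logically distinct: the first is about the geometric nature of a quadric section, and the second is an orbit-size computation. I would begin with the intersection $\Pi \cap S_\gamma^\perp \cap \cQ$. Here $S_\gamma^\perp$ meets $\Pi$ in a solid (a $3$-space), so I am intersecting the parabolic quadric $\cQ \simeq \cQ(4,q)$ with a hyperplane of $\Pi = \PG(4,q)$, and by the standard classification of hyperplane sections of a parabolic quadric in even characteristic (cf.\ \cite[Theorem 1.51]{HT}) the result is either a hyperbolic quadric $\cQ^+(3,q)$, an elliptic quadric $\cQ^-(3,q)$, or a cone, depending on whether the hyperplane passes through the nucleus $N=(0,0,1,0,0,0)$ and on the discriminant of the induced quadratic form. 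The key computation is therefore to substitute the equation of $S_\gamma^\perp$ into the quadratic form $X_3^2 + X_1X_5 + X_2X_4$ defining $\cQ$, eliminate one coordinate, and read off the type of the resulting quadric in the solid. I expect the irreducibility of $X^2+X+\gamma$ over $\F_{q^2}$ to be exactly what forces the section to be elliptic rather than hyperbolic.

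\medskip

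For the elliptic determination I would first check that $S_\gamma^\perp$ does not pass through the nucleus $N$ (so that the section is a genuine nondegenerate quadric of the solid, not a cone): substituting $N$ into the equation $X_1+X_2+X_3+\gamma^q X_4 + \gamma X_5 = 0$ gives $1 \ne 0$, confirming $N \notin S_\gamma^\perp$. Then, using $X_3 = X_1 + X_2 + \gamma^q X_4 + \gamma X_5$ from the hyperplane equation, I substitute into $X_3^2 + X_1X_5 + X_2X_4 = 0$ to obtain a quadratic form in the four remaining coordinates $X_1,X_2,X_4,X_5$. Its type (elliptic versus hyperbolic) is governed by a discriminant-type invariant: in even characteristic a nondegenerate quadric $\cQ^{\pm}(3,q)$ is elliptic precisely when the associated Arf-type invariant is nontrivial, and I expect this invariant to reduce exactly to the condition that $X^2+X+\gamma$ is irreducible. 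The \textbf{main obstacle} is carrying out this even-characteristic type determination cleanly: one cannot simply compute a determinant as in odd characteristic, so I would pass to the Arf invariant (or equivalently identify a maximal totally isotropic subspace and count that the section has no line, forcing the elliptic type $\cQ^-(3,q)$ with $q^2+1$ points).

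\medskip

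For the orbit size $|S_\gamma^G| = q^2(q^2-1)$, since $G \simeq \PSL(2,q^2)$ has order $q^2(q^4-1)$, the orbit-stabilizer theorem reduces the claim to showing that the stabilizer $G_{S_\gamma}$ has order $q^2+1$. The natural strategy is to identify the stabilizer explicitly inside the matrix description of $G$. I would compute $M_{a,b,c,d}\cdot S_\gamma$ and impose that the image equals $S_\gamma$ projectively; this yields a system of equations in $a,b,c,d$ subject to $ad+bc=1$. I expect the stabilizer to be a cyclic subgroup of order $q^2+1$ — the image in $\PSL(2,q^2)$ of a nonsplit torus — which is consistent both with the index $q^2(q^2-1) = (q^4-1)/(q^2+1) \cdot \tfrac{q^2}{q^2+1}\cdot(q^2+1)$, more simply $q^2(q^4-1)/(q^2+1) = q^2(q^2-1)$, and with the fact that $S_\gamma$ arises from the irreducible polynomial $X^2+X+\gamma$, whose root generates a quadratic (hence nonsplit) extension. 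Concretely I would verify that the index computation $|G|/|G_{S_\gamma}| = q^2(q^4-1)/(q^2+1)$ gives exactly $q^2(q^2-1)$, so it suffices to exhibit $q^2+1$ stabilizing matrices (or to show no point of $\cO$ is fixed by more, ruling out a larger stabilizer). The delicate point is ensuring the stabilizer is no larger than the nonsplit torus, which I would settle by the irreducibility hypothesis preventing any additional unipotent or split-torus elements from fixing $S_\gamma$.
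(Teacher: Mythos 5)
The orbit-size half of your proposal is sound and is essentially the paper's argument: both run orbit--stabilizer and identify $Stab_G(S_\gamma)$ with a nonsplit torus of order $q^2+1$. The paper shortcuts the computation you propose: since $G$ preserves $\perp$ and $\Pi$, it notes $Stab_G(S_\gamma) \le Stab_G(\Pi \cap S_\gamma^\perp)$, identifies the latter as the dihedral group of order $2(q^2+1)$ generated by the matrices $M_{a,b,\gamma b,a+b}$ with $a^2+ab+\gamma b^2 = 1$ together with the involution $M_{1,0,1,1}$, and then only checks that the cyclic part fixes $S_\gamma$ while the involution does not. Your direct route (solve $M_{a,b,c,d}S_\gamma = S_\gamma$; note the last row of $M_{a,b,c,d}$ forces the projective scalar to be $1$) would also get there, and your fallback via the subgroup structure of $\PSL(2,q^2)$ --- a proper subgroup containing a nonsplit torus is the torus or its dihedral normalizer --- is legitimate.

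The genuine gap is in the elliptic-quadric half. Your plan is to substitute $X_3 = X_1+X_2+\gamma^q X_4+\gamma X_5$ into $X_3^2+X_1X_5+X_2X_4$ and read off the type from an Arf invariant ``in the four remaining coordinates $X_1,X_2,X_4,X_5$''. But these are not four independent $\F_q$-coordinates: $\Sigma$ is a Baer subgeometry of $\PG(5,q^2)$, on which $X_2 = X_1^q$ and $X_5 = X_4^q$ with $X_1, X_4$ ranging over $\F_{q^2}$, and the coefficients $\gamma, \gamma^q$ of the substituted form do not lie in $\F_q$ (if $\gamma \in \F_q$, then $X^2+X+\gamma$ would split over $\F_{q^2}$). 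So what your substitution produces is not an $\F_q$-quadratic form whose Arf invariant one can compute; and computing instead the type of the extended quadric $\bar{\Pi} \cap \bar{S_\gamma^\perp} \cap \bar{\cQ}$ over $\F_{q^2}$ does not settle the question, because the type of a quadric over $\F_{q^2}$ does not determine the type of its intersection with a Baer subgeometry. A field-descent step is unavoidable: either rewrite the section in genuine $\F_q$-coordinates of $\Sigma$ (fixing an $\F_q$-basis of $\F_{q^2}$) and then compute, or count points. The paper does this descent with machinery it has already built: the solid $\Pi \cap S_\gamma^\perp$ is the solid $\Gamma_r$ attached to the line $r = \pi \cap \bar{S_\gamma^\perp}$ of the plane $\pi$; the line $r$ is external to the conic $\pi \cap \bar{\cQ}$ precisely because $X_3^2+X_3X_5+\gamma X_5^2 = 0$ has no nontrivial solution, i.e.\ because $X^2+X+\gamma$ is irreducible over $\F_{q^2}$, and $N \notin r$; then the spread argument earlier in the section (the lines of $\cD_r$ are all tangent to $\cQ$) gives $|\Gamma_r \cap \cQ| = q^2+1$, forcing the section to be elliptic. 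Your instinct that irreducibility is the operative condition is correct, but your plan has no mechanism to transport that $\F_{q^2}$-condition down to an $\F_q$-type invariant; that transport is the real content of this half of the lemma.
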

\begin{proof}
Let $\bar{S_{\gamma}^\perp}$ be the hyperplane of $\PG(5, q^2)$ obtained by extending $S_{\gamma}^\perp$ over $\F_{q^2}$. Then $\pi \cap \bar{S_{\gamma}^\perp}$ is a line $r$ external to $\pi \cap \bar{\cQ}$ and $N \notin r$. 

Note that $Stab_G(S_\gamma) \le Stab_G(\Pi \cap S_{\gamma}^\perp)$. The subgroup of $G$ fixing $\Pi \cap S_{\gamma}^\perp$ is the dihedral group of order $2(q^2+1)$ generated by $M_{a, b, \gamma b, a+b}$, with $a,b,c,d \in \F_{q^2}$, $a^2+ ab + \gamma b^2 = 1$ and $M_{1, 0, 1, 1}$. Some straightforward calculations show that the cyclic group of order $q^2+1$ fixes $S_\gamma$, whereas the involution generated by $M_{1, 0, 1, 1}$ does not. Hence $|Stab_G(S_\gamma)| = q^2+1$ and $|S_\gamma^G| = q^2(q^2-1)$. 
\end{proof}

By Lemma~\ref{perp}, the set $\cO$ has size $(q+1)(q^3+1)$. An analytic description of $S_\gamma^G$ is provided below.
\begin{align*}
& S_\gamma^G = \{P_{x, y} \colon x, y \in \F_{q^2}, x \ne 0 \},
\end{align*} 
where 
\begin{align*}
& P_{x,y} = \left( \frac{x^2}{\w}, \frac{x^{2q}}{\w^q}, \frac{\w^2 \gamma^q + \w^{2q} \gamma + x y \w^q + x^q y^q \w + 1}{\w^{q+1}}, f(x, y)^q, f(x, y), 1 \right) 
\end{align*}
and
\begin{align*}
& f(x, y) = \frac{x^{2} y^{2} + x y + \w^q x^{2} + \gamma}{\w x^2}.
\end{align*}
Observe that $P_{x, y} = P_{x', y'}$, implies $(x, y) = (x', y')$. In order to prove that $\cO$ is a $(q+1)$-ovoid of $\cW(5, q)$, a preliminary study regarding the action of the group $G$ on the lines of $\cW(5, q)$ contained in $\Pi$ and on the generators of $\cW(5, q)$ is needed. These properties are acquired in various results. In particular, in Lemma~\ref{lines}, it is shown that $G$ has four orbits on lines of $\cW(5, q)$ contained in $\Pi$, according as they have $0$, $1$, $2$ or $q+1$ points in common with $\cQ$, whereas in Lemma~\ref{plane}, Lemma~\ref{disjoint} and Proposition~\ref{conic}, it is proven that the generators of $\cW(5, q)$, that are either contained in $\Pi$ or intersect $\Pi$ in a line having one or $q+1$ points in common with $\cQ$, meet $\cO$ in $q+1$ points. 

Observe that both the point $N$ and the hyperplane $\bar{\Pi}$ are left invariant by $G$. In particular, $G$ fixes the following planes of $\bar{\Pi}$:
\begin{align*}
& \pi: X_2 = X_4 = X_6 = 0 \mbox{ and } \pi^\tau: X_1 = X_5 = X_6 = 0,
\end{align*}
where $\pi \cap \Sigma = \pi^\tau \cap \Sigma = \{N\}$. The parabolic quadric $\bar{\cQ}$ intersects both $\pi$ and $\pi^\tau$ in a non-degenerate conic and $G$ acts faithfully on both $\pi$ and $\pi^\tau$. Moreover, it is easily seen that $G$ leaves $\Sigma$ invariant. 

If $P \in \pi \cap \bar{\cQ}$, then $\langle P, P^\tau \rangle_{q^2}$ is a line of both $\bar{\cQ}$ and $\cH(5, q^2)$. Since $\Sigma \cap \langle P, P^\tau \rangle_{q^2}$ is a line of $\cQ$, it follows that 
\begin{align*}
\cS = \{\Sigma \cap \langle P, P^\tau \rangle_{q^2} \colon P \in \pi \cap \bar{\cQ}\}
\end{align*}
is a classical line-spread of $\cQ$. Since the Slines of $\cS$ are in bijective correspondence with the points of the conic $\bar{\Pi} \cap \bar{\cQ}$ and $G$ acts in its natural representation on the $q^2+1$ points of the conic $\bar{\Pi} \cap \bar{\cQ}$, it follows that $G$ acts in its natural representation on the $q^2+1$ lines of $\cS$. On the other hand, if $R \in \pi \setminus (\bar{\cQ} \cup \{N\})$, then $\langle R, R^\tau \rangle_{q^2}$ is a line of $\cH(5, q^2)$ and $\Sigma \cap \langle R, R^\tau \rangle_{q^2}$ is a line of $\Sigma$. Hence $\Sigma \cap \langle R, R^\tau \rangle_{q^2}$ is a line of $\cW(5, q)$. 

Let $\sigma_1$ be the plane $\Sigma \cap  \langle R, R^\tau, N \rangle_{q^2}$ of $\Sigma$. Since the line $\langle R, N \rangle_{q^2}$ meets the conic $\pi \cap \bar{\cQ}$ in exactly one point, say $P$, we have that $\sigma_1 \cap \cQ$ is the line $\Sigma \cap \langle P, P^\tau \rangle_{q^2}$ of $\cS$. Therefore $\Sigma \cap \langle R, R^\tau \rangle_{q^2}$ is a line that is tangent to $\cQ$ and each of the $q^2-1$ lines of $\sigma_1$ tangent to $\cQ$ not through $N$ arises from a point of $\langle R, N \rangle_{q^2} \setminus \{P, N\}$ in this way. Moreover $\sigma_1$ is a plane of $\cW(5, q)$.

Let $r$ be a line of $\pi$ not incident with $N$. Since $\Sigma \cap \langle T, T^\tau \rangle_{q^2}$ is a line of $\Sigma$ for every point $T \in r$ and $r$, $r^\tau$ are skew, it follows that 
\begin{align*}
\cD_{r} = \{\Sigma \cap \langle T, T^\tau \rangle_{q^2} \colon T \in r\}
\end{align*}
is a Desarguesian line-spread of a solid $\Gamma_r$ of $\Sigma$. The line $r$ has none or two points of $\pi \cap \bar{\cQ}$; in the former case $\cD_r$ consists of lines of $\cW(5, q)$ that are tangent to $\cQ$, whereas in the latter case $\cD_r$ has two lines of $\cS$ and $q^2-1$ lines of $\cW(5, q)$ that are tangent to $\cQ$. Therefore $|\Gamma_r \cap \cQ|$ equals $q^2+1$ or $(q+1)^2$ and hence $\Gamma_r \cap \cQ$ is an elliptic quadric or a hyperbolic quadric, according as $r$ is external or secant to $\pi \cap \bar{\cQ}$. Moreover, since there are $q^4$ solids of $\Pi$ meeting $\cQ$ in an elliptic quadric or a hyperbolic quadric, it follows that every such solid arises from a line $r$ of $\pi$ not containing $N$. 

Denote by $\sigma_2$ the plane of $\cW(5, q)$ given by $\sqrt{\w} X_1 + \sqrt{\w^q} X_2 = \sqrt{\w^q} X_4 + \sqrt{\w} X_5 = X_6 = 0$. Thus $\sigma_2 \cap \cQ$ consists of the non-degenerate conic 
\begin{align*}
& \cC = \left\{\left( \sqrt{\w^q} \alpha, \sqrt{\w} \alpha, \sqrt{\alpha \beta}, \sqrt{\w} \beta, \sqrt{\w^q} \beta, 0 \right) \colon \alpha, \beta \in \F_{q}, (\alpha, \beta) \ne (0,0) \right\}
\end{align*} 
whose nucleus is the point $N$.

\begin{lemma}\label{plane}
The group $G$ has two orbits $\cG_1$ and $\cG_2$ on planes of $\cW(5, q)$ through $N$. A plane of $\cW(5, q)$ belongs to $\cG_1$ or $\cG_2$ according as it meets $\cQ$ in a line of $\cS$ or in a non-degenerate conic. 
\end{lemma}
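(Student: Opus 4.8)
The plan is to first reduce to the hyperplane $\Pi$ and pin down how a plane through $N$ meets $\cQ$, then treat the two orbits separately. Any generator $g$ of $\cW(5,q)$ containing $N$ satisfies $g \subseteq g^\perp \subseteq N^\perp = \Pi$, so every plane of $\cW(5,q)$ through $N$ lies in $\Pi$. Since $N$ is the nucleus of the parabolic quadric $\cQ$ and $q$ is even, each line of $\Pi$ through $N$ is tangent to $\cQ$, meeting it in exactly one point; projecting from $N$ thus identifies $\cQ$ bijectively with a $\PG(3,q)$, and for a plane $g$ through $N$ it forces $g\cap\cQ$ to meet each of the $q+1$ lines of $g$ through $N$ in a single point, whence $|g\cap\cQ| = q+1$. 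A $(q+1)$-point section cut on a plane by a quadric is either a non-degenerate conic (necessarily with nucleus $N$, as every line of $g$ through $N$ is tangent to it) or a line of $\cQ$. In the latter case that line lies in the totally isotropic plane $g$, so it is totally isotropic with respect to $\perp$; since the only lines of $\cQ$ that are lines of $\cW(5,q)$ are the $q^2+1$ lines of $\cS$, it must lie in $\cS$. This yields the stated dichotomy, and the two types are preserved by $G$, which fixes $N$ and stabilizes both $\cQ$ and $\cS$.

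For the first orbit, I would observe that $g \mapsto g\cap\cQ$ is a $G$-equivariant bijection from $\cG_1$ onto $\cS$, with inverse $\ell \mapsto \langle \ell, N\rangle$ (this is totally isotropic because $\ell \subseteq \cS \subseteq \Pi = N^\perp$ and $\ell$ is itself totally isotropic). As $G \cong \PSL(2,q^2)$ acts on the $q^2+1$ lines of $\cS$ as $\PG(1,q^2)$ in its natural, transitive representation, $\cG_1$ is a single $G$-orbit.

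For the second orbit I would first count the conic-type planes. The planes of $\cW(5,q)$ through $N$ correspond to the totally isotropic lines of the quotient polar space $\cW(3,q)$ induced on $\Pi/\langle N\rangle$ (whose radical is exactly $\langle N\rangle$), of which there are $(q+1)(q^2+1)$; removing the $q^2+1$ planes of $\cG_1$ leaves $|\cG_2| = q(q^2+1)$. Since $\sigma_2 \in \cG_2$, it suffices to show that its orbit has this size, i.e. that $|\mathrm{Stab}_G(\sigma_2)| = q(q^2-1)$. Now $\mathrm{Stab}_G(\sigma_2)$ equals the stabilizer of the conic $\cC = \sigma_2 \cap \cQ$, and under the action of $G$ on the $\PG(1,q^2)$ of $\cS$ the points of $\cC$ single out the $q+1$ lines of $\cS$ meeting $\cC$; the plan is to show these form a Baer subline, so that $\mathrm{Stab}_G(\sigma_2)$ is the subfield subgroup $\PGL(2,q) \le G = \PGL(2,q^2)$ (recall $\PSL(2,q^2) = \PGL(2,q^2)$ for $q$ even) fixing it, of order $q(q^2-1)$. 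By orbit–stabiliser the orbit of $\sigma_2$ then has size $q(q^2+1) = |\cG_2|$, so $\cG_2$ is one $G$-orbit.

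The hard part will be this last step: determining $\mathrm{Stab}_G(\sigma_2)$ exactly. Concretely it amounts to deciding which matrices $M_{a,b,c,d}$ fix $\sigma_2$ (equivalently preserve the Baer-subline structure of $\cC$ inside $\cQ$) and checking that they form a group of order $q(q^2-1)$; alternatively one may exhibit a subfield subgroup $\PGL(2,q) \le G$ fixing $\sigma_2$ for the lower bound and then use the count $|\cG_2| = q(q^2+1)$ together with $\cG_1$ being a full orbit to force equality. Everything else is routine geometry and counting.
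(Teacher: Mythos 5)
Your skeleton is essentially the paper's: reduce to $\Pi$ via $N^\perp = \Pi$, use the nucleus to get $|g \cap \cQ| = q+1$ for every plane $g$ of $\cW(5,q)$ through $N$, identify the planes meeting $\cQ$ in a line of $\cS$ with $\cS$ itself (one orbit of size $q^2+1$ by transitivity of $G$ on $\cS$), and finish by orbit--stabilizer against the total of $(q+1)(q^2+1)$ planes through $N$. However, there are two genuine gaps. The first is your step ruling out planes that meet $\cQ$ in a line outside $\cS$: you invoke the fact that the only lines of $\cQ$ that are lines of $\cW(5,q)$ are the $q^2+1$ lines of $\cS$. That fact is not available at this point; in the paper it is established only \emph{after} this lemma (it follows from Lemma~\ref{lines}, see Proposition~\ref{contained}), and it is essentially equivalent to the dichotomy you are proving: if $m$ is a totally isotropic line of $\cQ$, then $\langle m, N\rangle$ is a totally isotropic plane through $N$ meeting $\cQ$ exactly in $m$, so the dichotomy forces $m \in \cS$, and conversely. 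So as written this step is circular. The paper avoids the issue entirely by making the dichotomy an \emph{output} of the counting rather than an input: once both orbit sizes are pinned down exactly, $|\cG_1| + |\cG_2| = (q+1)(q^2+1)$ leaves no room for any other type of plane.

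The second gap is the crux you explicitly defer: the exact determination of $Stab_G(\sigma_2)$. Your route (a) is precisely what the paper does (it checks that $M_{a,b,c,d}$ fixes $\sigma_2$ if and only if $a,b,c,d \in \F_q$ with $ad+bc=1$, so $Stab_G(\sigma_2) \simeq \PSL(2,q)$ of order $q^3-q$), but you have not carried it out. Your route (b) is logically backwards: exhibiting a subfield $\PGL(2,q)$ inside $Stab_G(\sigma_2)$ is a \emph{lower} bound on the stabilizer, hence an \emph{upper} bound $|\sigma_2^G| \le q(q^2+1)$, and since $\sigma_2^G \subseteq \cG_2$ already gives that inequality, nothing can be "forced to equality" this way; what the count needs is an upper bound on the stabilizer, i.e. $|\sigma_2^G| \ge q(q^2+1)$. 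Your Baer-subline idea can in fact supply exactly that missing bound: since $G$ acts faithfully on $\cS$, $Stab_G(\sigma_2)$ embeds into the setwise stabilizer of the $q+1$ lines of $\cS$ meeting $\cC$, and if those form a Baer subline of $\cS \simeq \PG(1,q^2)$ this stabilizer is $\PGL(2,q)$, giving $|Stab_G(\sigma_2)| \le q^3-q$ --- which, combined with the count, closes the whole proof and renders your first, circular step unnecessary. But note two caveats: stabilizing the subline does not conversely imply stabilizing $\cC$ (an element may slide the points of $\cC$ along the spread lines), so identifying $Stab_G(\sigma_2)$ with the full subline stabilizer, as you do, is unjustified without the matrix verification; and the subline property itself is a computation you have not performed. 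As it stands, the proposal is a plausible plan whose two load-bearing steps are, respectively, circular and missing.
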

\begin{proof}
Let $\cG_1 = \sigma_1^G$. Since $\sigma_1 \cap \cQ$ is a line of $\cS$ and the group $G$ is transitive on the $q^2+1$ lines of $\cS$ we have that $|\cG_1| = q^2+1$. 

Let $\cG_2 = \sigma_2^G$. A member of $G$ fixes $\sigma_2$ if and only if it is generated by $M_{a,b,c,d}$, where $a,b,c,d \in \F_q$, with $ad+bc = 1$. Hence $K = Stab_{G}(\sigma_2) \simeq \PSL(2, q)$ and $|\sigma_2^G| = |G|/(q^3-q) = q^3+q$. 

Note that there are $|\cG_1|+|\cG_2| = (q+1)(q^2+1)$ planes of $\cW(5, q)$ through $N$. This concludes the proof.
\end{proof}

\begin{remark}\label{remark:faithful}
The group $K = Stab_G(\sigma_2) \simeq \PSL(2, q)$ has  a faithful action on $\sigma_2$.
\end{remark}

In view of Remark \ref{remark:faithful}, the group $K$ has three orbits on points of $\sigma_2$, namely the conic $\cC$, its nucleus $N$ and $\sigma_2 \setminus (\cC \cup \{N\})$. Since every point of $\Pi \setminus (\cQ \cup \{N\})$ lies on $q^2$ planes of $\cG_2$, the next result follows.

\begin{lemma}\label{points}
The group $G$ has three orbits on points of $\Pi$, namely $\{N\}$, $\cQ$ and their complement. 
\end{lemma}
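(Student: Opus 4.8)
The plan is to show that, apart from the fixed point $N$, the group $G$ acts transitively on $\cQ$ and on the complement $\Pi \setminus (\cQ \cup \{N\})$. Since $N$ is left invariant by $G$, the singleton $\{N\}$ is an orbit; and since $\Pi \simeq \PG(4, q)$ has $|\Pi| = q^4 + q^3 + q^2 + q + 1$ points, while $|\cQ| = (q+1)(q^2+1)$ and $|\Pi \setminus (\cQ \cup \{N\})| = q^4 - 1$, one checks $1 + (q+1)(q^2+1) + (q^4-1) = q^4 + q^3 + q^2 + q + 1 = |\Pi|$. Thus once the two transitivity statements are established, the three sets exhaust $\Pi$ and are forced to be exactly the orbits.

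For both transitivity statements I would reduce to the faithful action of $K = Stab_G(\sigma_2) \simeq \PSL(2, q)$ on $\sigma_2 \simeq \PG(2, q)$ provided by Remark~\ref{remark:faithful}. As $q$ is even, $\PSL(2, q) = \PGL(2, q)$, and its action on a plane through the conic $\cC = \sigma_2 \cap \cQ$ is the classical one with exactly three orbits, on each of which $K$ is transitive: the $q+1$ points of $\cC$, the nucleus $N$, and the $q^2 - 1$ remaining points $\sigma_2 \setminus (\cC \cup \{N\})$. By Lemma~\ref{plane} the group $G$ is transitive on $\cG_2$, so a routine flag-transitivity argument (transitive on the base $\cG_2$, with $K$ transitive on the relevant fibre inside $\sigma_2$) shows that $G$ is transitive on the set of $\cQ$-points lying in some plane of $\cG_2$, and likewise transitive on the set of complement points lying in some plane of $\cG_2$. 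It then only remains to verify that these two sets are all of $\cQ$ and all of the complement, respectively.

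This covering statement is the crux, and I expect it to be the main obstacle. The plan is to obtain it from the planes of $\cG_1$, which are precisely the $\langle N, \ell \rangle$ with $\ell \in \cS$: a dimension count in $\Pi$ (using that distinct spread lines are skew and that $N$, being the nucleus of $\cQ$, cannot be a nucleus of any hyperbolic section $\langle \ell, \ell' \rangle$, since $\cQ^+(3,q)$ has none) shows that any two of these planes meet only in $N$, so the $q^2+1$ planes of $\cG_1$ partition $\Pi \setminus \{N\}$. Hence every point $P \neq N$ lies on exactly one plane of $\cG_1$. On the other hand $\langle N, P \rangle$ is a totally isotropic line, as $P \in N^\perp = \Pi$, so it is contained in $q+1$ generators of $\cW(5, q)$; all of these pass through $N$ and therefore lie in $\cG_1 \cup \cG_2$ by Lemma~\ref{plane}, exactly one being in $\cG_1$ and the remaining $q$ in $\cG_2$. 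In particular every point of $\cQ$ and every point of the complement lies in a plane of $\cG_2$, which supplies the required covering. Combined with the flag-transitivity of the second paragraph and the orbit-size bookkeeping of the first, this yields exactly the three stated orbits; the lifting and counting are then routine, and only the partition-and-covering step demands real work.
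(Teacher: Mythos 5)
Your proof is correct and follows essentially the same route as the paper: the paper's (very terse) argument is exactly the combination of Remark~\ref{remark:faithful} (the three orbits of $K$ on $\sigma_2$), the transitivity of $G$ on $\cG_2$ from Lemma~\ref{plane}, and a covering assertion for the planes of $\cG_2$, followed by your flag-transitivity lifting. The one place you go beyond the paper is the crux you identified: the paper simply \emph{asserts} that every point of $\Pi \setminus (\cQ \cup \{N\})$ lies on $q^2$ planes of $\cG_2$, whereas you prove the covering, via the observation that the planes of $\cG_1$ are precisely the planes $\langle N, \ell \rangle$, $\ell \in \cS$, that they pairwise meet only in $N$ (your nucleus argument is sound: if $N$ lay in the solid $\langle \ell, \ell' \rangle$, it would be a point of $\PG(3,q)$ off $\cQ^+(3,q)$ on only tangent lines, which is impossible by counting), hence partition $\Pi \setminus \{N\}$, so that among the $q+1$ generators through the totally isotropic line $\langle N, P \rangle$ exactly one lies in $\cG_1$ and the remaining $q$ in $\cG_2$. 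As a byproduct your argument corrects the multiplicity stated in the paper: each point of $\Pi \setminus (\cQ \cup \{N\})$ lies on exactly $q$, not $q^2$, planes of $\cG_2$, consistent with the incidence count $|\cG_2|\,(q^2-1) = q(q^4-1) = q \cdot |\Pi \setminus (\cQ \cup \{N\})|$; only the existence of one such plane per point is needed, so the paper's conclusion is unaffected.
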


\begin{lemma}\label{lines}
The group $G$ has four orbits on lines of $\cW(5, q)$ in $\Pi$ not through $N$. They are $\cS$ and $\cL_i$, $i = 0, 1, 2$, and a line of $\cL_i$ has $i$ points in common with $\cQ$.
\end{lemma}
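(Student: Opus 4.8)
The plan is to classify the lines of $\cW(5,q)$ in $\Pi$ not through $N$ by the plane they span together with $N$. If $r$ is such a line, then $N \perp r$ (because $r \subset \Pi = N^\perp$) and $r$ is totally isotropic, so $\langle N, r\rangle$ is a totally isotropic plane, i.e.\ a plane of $\cW(5,q)$ through $N$; by Lemma~\ref{plane} it lies in $\cG_1$ or $\cG_2$. Recall that $q$ is even, so every tangent line to a non-degenerate conic passes through its nucleus. If $\langle N, r\rangle \in \cG_2$, then $\langle N, r\rangle \cap \cQ$ is a non-degenerate conic whose nucleus is $N$; since $r$ avoids $N$ it is secant or external to this conic, giving $|r \cap \cQ| \in \{0,2\}$. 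If $\langle N, r\rangle \in \cG_1$, then $\langle N, r\rangle \cap \cQ$ is a line $m$ of $\cS$, and since $N \notin \cQ \supseteq m$ we have $N \notin m$; a line $r$ of $\langle N, r\rangle$ avoiding $N$ either equals $m$, giving $|r\cap\cQ| = q+1$, or meets $m$ in a single point, giving $|r \cap \cQ| = 1$. This exhibits the four announced families and, in particular, shows that the lines of $\cW(5,q)$ contained in $\cQ$ are exactly the $q^2+1$ spread lines of $\cS$.

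For the transitivity, observe first that $\langle N, r\rangle$ is the unique plane of $\cW(5,q)$ through $N$ containing $r$, so the $G$-action on each family is governed by the $G$-action on $\cG_1, \cG_2$ together with the action of a plane-stabilizer inside a fixed plane. Since $\cG_1 = \sigma_1^G$ and $\cG_2 = \sigma_2^G$ by Lemma~\ref{plane}, it suffices to prove that $Stab_G(\sigma_2)$ is transitive on the secant (resp.\ external) lines of $\sigma_2$ avoiding $N$, and that $Stab_G(\sigma_1)$ is transitive on the $q^2-1$ lines of $\sigma_1$ meeting $m_1 = \sigma_1 \cap \cQ$ in one point and avoiding $N$. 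For $\sigma_2$ this is clean: because $q$ is even, $K = Stab_G(\sigma_2) \simeq \PSL(2,q) = \PGL(2,q)$ acts faithfully (Remark~\ref{remark:faithful}) as the full conic group on $\sigma_2$. It is $2$-transitive on the $q+1$ points of $\cC$, hence transitive on the $q(q+1)/2$ secants (the orbit $\cL_2$), while the $q(q-1)/2$ external lines correspond to the Frobenius-conjugate point pairs of the conic extended over $\F_{q^2}$, on which $K$ acts with dihedral stabilizer of order $2(q+1)$ and hence transitively (the orbit $\cL_0$).

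The remaining and most delicate point is the $\cG_1$ case. Here $Stab_G(\sigma_1) = Stab_G(m_1)$ is a Borel subgroup $B$ of $G \simeq \PSL(2, q^2)$ of order $q^2(q^2-1)$, fixing both $N$ and the spread line $m_1$. The $q^2-1$ lines of $\cL_1$ in $\sigma_1$ are precisely those arising, as in the construction of $\sigma_1$, from the $q^2-1$ points of the tangent line $\langle R, N\rangle_{q^2}$ to $\pi \cap \bar{\cQ}$ distinct from $N$ and from the tangency point $P_1$; the group $B$ fixes $N$ and $P_1$ and permutes these points, so the task is to show that $B$ is transitive on them. I expect this to be the main obstacle: it cannot be read off from Lemma~\ref{plane} and must be extracted either from the explicit matrices $M_{a,b,c,d}$ describing the Borel action on $\pi$, or from an orbit-length count (any orbit length divides $|B| = q^2(q^2-1)$, and the orbits must cover the $q^2-1$ points left after $B$ fixes two of the $q^2+1$ points of the tangent line). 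Once transitivity in $\sigma_1$ is established, combining with $G$-transitivity on $\cG_1$ yields a single orbit $\cL_1$ of size $(q^2+1)(q^2-1) = q^4-1$, and $\cS$ is the orbit of the $q^2+1$ spread lines. As a final consistency check, the four orbit lengths
\begin{align*}
|\cS| = q^2+1,\quad |\cL_1| = q^4-1,\quad |\cL_2| = \tfrac{q^2(q+1)(q^2+1)}{2},\quad |\cL_0| = \tfrac{q^2(q-1)(q^2+1)}{2}
\end{align*}
sum to $q^2(q^3+q^2+q+1)$, which is exactly the number of lines of $\cW(5,q)$ in $\Pi$ not through $N$ (namely $q^2$ times the $(q^2+1)(q+1)$ totally isotropic lines of the quotient $\cW(3,q)$ obtained by projecting $\Pi$ from $N$), confirming that the four families partition all such lines.
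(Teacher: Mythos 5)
Your structural reduction is sound, and in one respect it is cleaner than the paper's argument: by observing that a line $r$ of $\cW(5,q)$ in $\Pi$ with $N \notin r$ spans with $N$ a totally isotropic plane, you obtain the partition into the four families $\cS$, $\cL_0$, $\cL_1$, $\cL_2$ directly from Lemma~\ref{plane}, whereas the paper gets completeness only at the very end, by checking that the four orbit sizes sum to $q^2(q+1)(q^2+1)$, the total number of such lines. Your handling of the $\cG_2$ case (secant and external lines via the faithful action of $K \simeq \PGL(2,q)$ on $\sigma_2$) is correct and essentially the same as the paper's.

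However, the proof is not complete: you explicitly leave open the transitivity of $B = Stab_G(\sigma_1)$ on the $q^2-1$ tangent lines of $\sigma_1$ avoiding $N$, i.e.\ the transitivity of the stabilizer in $G$ of a point $P_1$ of the conic $\pi \cap \bar{\cQ}$ on the $q^2-1$ points of the tangent line $\langle P_1, N \rangle_{q^2}$ other than $P_1$ and $N$. This is precisely the fact the paper invokes to produce the orbit $\cL_1$ ("the group $G$ is transitive on the $q^2-1$ points of $\langle R, N\rangle_{q^2}$ not on $\bar{\cQ}$ and distinct from $N$"), so your proposal is missing exactly the step that creates the one orbit not governed by Lemma~\ref{plane} and Remark~\ref{remark:faithful}. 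Moreover, of the two fallback routes you suggest, the orbit-length count cannot work: knowing that each $B$-orbit length divides $|B| = q^2(q^2-1)$ and that the lengths sum to $q^2-1$ does not force a single orbit (already for $q=2$ the pattern $1+2$ satisfies both constraints). The matrix computation would work, but there is a cleaner fix: since $q$ is even, $G \simeq \PSL(2,q^2) = \PGL(2,q^2)$ acts faithfully on $\pi$ as the full stabilizer of the conic $\pi \cap \bar{\cQ}$, so in suitable coordinates the conic is $Y^2 + XZ = 0$ with nucleus $N = (0,1,0)$, $P_1 = (1,0,0)$, and tangent line $Z = 0$ at $P_1$; the torus elements of $Stab_G(P_1)$, induced by $t \mapsto at$ on the conic parameter, act as $(1,s,0) \mapsto (1,as,0)$ with $a \in \F_{q^2}^*$, which is already transitive on the $q^2-1$ points in question. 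With that single fact supplied, your argument closes and yields the lemma, with $|\cL_1| = (q^2+1)(q^2-1) = q^4-1$ as you state.
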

\begin{proof}
From the discussion above, if $\ell$ is a line of $\sigma_1$ tangent to $\cQ$ and not incident with $N$, then there exists a point $R \in \pi \setminus (\bar{\cQ} \cup \{N\})$ such that $\ell = \Sigma \cap \langle R, R^\tau \rangle_{q^2}$. Since the group $G$ is transitive on the $q^2-1$ points of $\langle R, N \rangle_{q^2}$ not on $\bar{\cQ}$ and distinct from $N$, it follows that $G$ permutes in a single orbit the $q^2-1$ lines of $\sigma_1$ tangent to $\cQ$ not through $N$. Let $\cL_1 = \ell^G$. Thus $|\cL_1^G| = (q^2-1)(q^2+1) = q^4-1$.

By Remark \ref{remark:faithful} the group $K$ has three orbits on lines of $\sigma_2$. They have size $q+1$, $q(q+1)/2$, $q(q-1)/2$ according as they contain tangent, secant or external lines to $\cC$. Two distinct planes of $\cG_2$ meet either in $\{N\}$ or in a line of $\cW(5, q)$ through $N$ that hence is tangent to $\cQ$. A line of $\sigma_2$ not containing $N$ meets $\cQ$ in $0$ or $2$ points. Let $\ell_i$, $i = 0,2$, be a line of $\sigma_2$ with $i$ points in common with $\cQ$ and let $\cL_i = \ell_i^G$. It follows that $|\cL_0| = (q^3+q)q(q-1)/2 = q^2(q^2+1)(q-1)/2$ and $|\cL_2| = (q^3+q)q(q+1)/2 = q^2(q^2+1)(q+1)/2$.

Note that there are $|\cS|+|\cL_0|+|\cL_1|+|\cL_2| = q^2(q+1)(q^2+1)$ lines of $\cW(5, q)$ in $\Pi$ not through $N$. This concludes the proof.
\end{proof}

\begin{lemma}\label{disjoint}
A plane of $\cW(5, q)$ containing a line of $\cS$ is disjoint from $S_\gamma^G$.
\end{lemma}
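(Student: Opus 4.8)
The plan is to reduce the statement to a disjointness property of the single orbit $S_\gamma^G$ from a solid, and then feed in Lemma~\ref{perp}. The key observation is that every generator plane $g$ of $\cW(5,q)$ through a fixed line $s\in\cS$ is contained in $s^\perp$: since $g$ is a generator it is totally isotropic, so $g=g^\perp$, and $s\subseteq g$ forces $g=g^\perp\subseteq s^\perp$. Hence $g\cap S_\gamma^G\subseteq s^\perp\cap S_\gamma^G$, and it suffices to prove that $s^\perp\cap S_\gamma^G=\emptyset$ for every line $s$ of $\cS$.

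First I would translate this into a statement about $\cQ$. For a point $P\in S_\gamma^G$ one has $P\in s^\perp$ if and only if $s\subseteq P^\perp$, and since $s\subseteq\cQ$ this is equivalent to the line $s$ lying on $P^\perp\cap\cQ$. Because $\cQ\subseteq\Pi$, we may write $P^\perp\cap\cQ=P^\perp\cap\Pi\cap\cQ$. Thus the whole claim becomes: no line of $\cS$ is contained in $P^\perp\cap\cQ$ for any $P\in S_\gamma^G$.

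Next I would identify $P^\perp\cap\cQ$ as an elliptic quadric. For $P=S_\gamma$ this is precisely Lemma~\ref{perp}, which asserts that $\Pi\cap S_\gamma^\perp\cap\cQ$ is an elliptic quadric $\cQ^-(3,q)$. For an arbitrary $P=S_\gamma^M$ with $M\in G$, I would use that $G$ stabilises $\Sigma$, the parabolic quadric $\cQ$ and the Hermitian variety $\cH(5,q^2)$, and therefore commutes with the induced symplectic polarity $\perp$ and fixes $\Pi=N^\perp$; consequently $P^\perp\cap\cQ=(S_\gamma^\perp)^M\cap\cQ=(S_\gamma^\perp\cap\cQ)^M$ is the image of $\cQ^-(3,q)$ under a collineation, hence again an elliptic quadric. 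Keeping track that $G$ simultaneously preserves $\perp$, $\cQ$ and $\Pi$ is the only point requiring a little care, and it is already recorded in the setup.

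Finally, an elliptic quadric $\cQ^-(3,q)$ is an ovoid of $\PG(3,q)$ and so contains no line, whereas each $s\in\cS$ is a line of $\cQ$; therefore $s\not\subseteq P^\perp\cap\cQ$ for every $P\in S_\gamma^G$. This gives $s^\perp\cap S_\gamma^G=\emptyset$ for all $s\in\cS$, and by the reduction above every plane of $\cW(5,q)$ through a line of $\cS$ is disjoint from $S_\gamma^G$. There is no serious obstacle here: the substantive input is Lemma~\ref{perp}, after which the argument is only a transitivity step plus the elementary fact that an elliptic quadric carries no lines.
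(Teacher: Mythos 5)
Your proof is correct and takes essentially the same route as the paper: both arguments rest on total isotropy forcing a line of $\cS$ into $P^\perp \cap \cQ$ for a point $P \in S_\gamma^G$ on the plane, on Lemma~\ref{perp} identifying $\Pi \cap S_\gamma^\perp \cap \cQ$ as an elliptic quadric, and on the fact that an elliptic quadric contains no lines. The only cosmetic difference is that the paper normalizes the point to $S_\gamma$ by a ``without loss of generality'' appeal to the transitivity of $G$ on $S_\gamma^G$, whereas you keep $P$ general and transport the elliptic-quadric property along the group action; these are the same use of $G$.
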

\begin{proof}
Let $\sigma'$ be a plane of $\cW(5, q)$ containing a line $\ell$ of $\cS$. If $\sigma' \subset \Pi$ the statement is trivial. If $\sigma' \not\subset \Pi$ and $|\sigma' \cap S_\gamma^G| \ne 0$, then we may assume without loss of generality that $S_\gamma \in \sigma'$. Thus $\Pi \cap S_\gamma^\perp \cap \cQ$ is an elliptic quadric containing the line $\ell$, a contradiction. 
\end{proof}

Let us consider the following representative $\ell_1$ of the line-orbit $\cL_1$.
\begin{align*}
& \ell_1 = \langle (1, 1, 0, 0, 0, 0), (\w, \w^q, 1, 0, 0, 0) \rangle_q = \Sigma \cap \langle (1,0,1,0,0,0), (0,1,1,0,0,0) \rangle_{q^2}.
\end{align*} 
Thus 
\begin{align*}
& \ell_1^\perp : X_4 = X_5 = X_6.
\end{align*} 
Let $\sigma \subset \ell_1^\perp$ be the plane of $\cW(5, q)$ given by 
\begin{align*}
X_4 + X_6 = X_5 + X_6 = X_1 + X_2 + X_3 + (\gamma + \gamma^q) X_6 = 0.
\end{align*}
Hence $\sigma \cap \Pi = \ell_1$. 

\begin{lemma}\label{tangent}
The group $G$ has one orbit, say $\cG$, on planes of $\cW(5, q)$ intersecting $\Pi$ in exactly a line of $\cL_1$.
\end{lemma}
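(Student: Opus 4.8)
The plan is to prove transitivity by an orbit--stabilizer count, reducing everything to a single explicit computation with the generators $M_{a,b,c,d}$. Since Lemma~\ref{lines} already gives that $G$ is transitive on $\cL_1$, with $|\cL_1| = q^4-1$, it suffices to fix the representative $\ell_1$ and understand the action of $Stab_G(\ell_1)$ on the planes of $\cW(5,q)$ through $\ell_1$ that are not contained in $\Pi$; transitivity of $G$ on the whole set $\cG$ will then follow by comparing cardinalities. Recall $|G| = q^2(q^4-1)$, since $G \cong \PSL(2,q^2)$ and $q$ is even.

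First I would count the target planes. Every plane of $\cW(5,q)$ through $\ell_1$ lies in the solid $\ell_1^\perp \colon X_4 = X_5 = X_6$, and, conversely, all $q+1$ planes through $\ell_1$ in this solid are generators of $\cW(5,q)$ (the induced form on $\ell_1^\perp/\ell_1$ is a nondegenerate symplectic form on a $2$-dimensional space, all of whose $1$-spaces are totally isotropic). Of these $q+1$ planes exactly one, namely $\ell_1^\perp \cap \Pi \colon X_4 = X_5 = X_6 = 0$, is contained in $\Pi$, while the remaining $q$ meet $\Pi$ precisely in $\ell_1$. Hence the number of planes of $\cW(5,q)$ meeting $\Pi$ in exactly a line of $\cL_1$ equals $q\,|\cL_1| = q(q^4-1)$.

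Next I would compute the stabilizers. Writing $\ell_1 = \Sigma \cap \langle R, R^\tau \rangle_{q^2}$ with $R = (1,0,1,0,0,0) \in \pi$, and using that $G$ fixes $\pi$ and that every element of $G$ commutes with the Baer involution $\tau$ (any collineation stabilizing $\Sigma$ normalizes the unique involution fixing $\Sigma$ pointwise), one obtains from $\langle R, R^\tau\rangle_{q^2} \cap \pi = \{R\}$ that $Stab_G(\ell_1) = Stab_G(R)$. A direct check that $M_{a,b,c,d}$ fixes $R$ forces $b=0$ and $a=d=1$, so $Stab_G(\ell_1) = \{M_{1,0,c,1} \colon c \in \F_{q^2}\}$, an elementary abelian group of order $q^2$. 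Now $\sigma = \langle \ell_1, Q_0 \rangle$ with $Q_0 = (0,0,\gamma+\gamma^q,1,1,1) \in \sigma \setminus \Pi$, and since $M_{1,0,c,1}$ stabilizes $\ell_1$, it fixes $\sigma$ if and only if $M_{1,0,c,1}\,Q_0 \in \sigma$. Computing $M_{1,0,c,1}\,Q_0$ and imposing the defining equation $X_1+X_2+X_3 = (\gamma+\gamma^q)X_6$ of $\sigma$ collapses, after simplification using $\w+\w^q = 1$, to the single condition $c^2/\w \in \F_q$. Since $\{c \in \F_{q^2} \colon c^2/\w \in \F_q\}$ is an $\F_q$-subspace of $\F_{q^2}$ of size $q$, this yields $|Stab_G(\sigma)| = q$.

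Finally I would assemble the count. Setting $\cG = \sigma^G$, the orbit--stabilizer theorem gives $|\cG| = |G|/|Stab_G(\sigma)| = q^2(q^4-1)/q = q(q^4-1)$, which coincides with the total number of target planes found above. As $G$ fixes $\Pi$ and permutes $\cL_1$, the orbit $\sigma^G$ is contained in that set, so equality of cardinalities forces $\sigma^G$ to be the whole set; hence $G$ acts with the single orbit $\cG$. The main obstacle is the explicit evaluation of $M_{1,0,c,1}\,Q_0$ and the verification that the fixing condition reduces exactly to $c^2/\w \in \F_q$; everything else is bookkeeping with orbit sizes.
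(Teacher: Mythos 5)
Your proposal is correct and follows essentially the same route as the paper's proof: count the $q(q^4-1)$ planes of $\cW(5,q)$ meeting $\Pi$ exactly in a line of $\cL_1$, identify $Stab_G(\ell_1)$ as $\{M_{1,0,c,1} \colon c \in \F_{q^2}\}$ of order $q^2$, show $|Stab_G(\sigma)| = q$, and conclude by orbit--stabilizer. Your fixing condition $c^2/\w \in \F_q$ is equivalent to the paper's condition $c = \mu\sqrt{\w}$, $\mu \in \F_q$ (extract square roots, a bijection in characteristic $2$), and the details you supply that the paper leaves implicit --- the count of the $q+1$ generators through $\ell_1$ inside $\ell_1^\perp$ and the identification $Stab_G(\ell_1) = Stab_G(R)$ via uniqueness of the Baer involution --- are correct.
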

\begin{proof}
There are $q(q^4-1)$ planes of $\cW(5, q)$ intersecting $\Pi$ in exactly a line of $\cL_1$. Let $\cG = \sigma^G$. The stabilizer of $\sigma$ in $G$ is a subgroup of $Stab_G(\ell_1)$, where $Stab_G(\ell_1)$ has order $q^2$ and is generated by $M_{1, 0, c, 1}$, with $c \in \F_{q^2}$. On the other hand, some calculations show that the projectivity generated by $M_{1, 0, c, 1}$ fixes $\sigma$ if and only if $c = \mu \sqrt{\w}$, $\mu \in \F_q$. Therefore $|Stab_G(\sigma)| = q$ and $|\cG| = |\sigma^G| = q(q^4-1)$, as required. 
\end{proof}

\begin{prop}\label{conic}
A plane of $\cW(5, q)$ belonging to $\cG$ meets $\cO$ in a non-degenerate conic.
\end{prop}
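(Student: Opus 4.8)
The plan is to exploit the transitivity of $G$ on $\cG$ (Lemma~\ref{tangent}) together with the $G$-invariance of $\cQ$ and of $S_\gamma^G$ (hence of $\cO = \cQ \cup S_\gamma^G$): since members of $G$ are projectivities sending conics to conics, it suffices to prove the statement for the chosen representative $\sigma$. First I would pin down $\sigma \cap \cQ$. As $\sigma \cap \Pi = \ell_1$ with $\ell_1 \in \cL_1$, and a line of $\cL_1$ meets $\cQ$ in exactly one point by Lemma~\ref{lines}, while every point $P_{x,y}$ of $S_\gamma^G$ has last coordinate $1 \ne 0$ and so lies off $\Pi$, one gets $\sigma \cap \cQ = \{T\}$ with $T = (1,1,0,0,0,0)$ and $\sigma \cap \cQ$ disjoint from $\sigma \cap S_\gamma^G$. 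Thus $\sigma \cap \cO = \{T\} \cup (\sigma \cap S_\gamma^G)$, and the problem reduces to computing $\sigma \cap S_\gamma^G$ and recognising the union as a conic.

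The heart of the argument is to show that $\sigma$ contains exactly $q$ points of $S_\gamma^G$. Here I would use the analytic description $S_\gamma^G = \{P_{x,y}\}$ and impose the two conditions cutting $\sigma$ out of $\Sigma$. After normalising the last coordinate to $1$, the requirement $X_4 = X_5 = X_6$ forces $f(x,y) = 1$, which, clearing denominators and using $\w + \w^q = 1$, becomes $(xy)^2 + xy + x^2 + \gamma = 0$. Setting $z = xy$ this reads $x^2 = z^2 + z + \gamma$; the crucial point is that, since $X^2 + X + \gamma$ is irreducible over $\F_{q^2}$, the right-hand side never vanishes, so $x \ne 0$ holds automatically and $z \mapsto (x, y)$ with $x = \sqrt{z^2 + z + \gamma}$, $y = z/x$ is a bijection onto the admissible pairs. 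Substituting $x^2 = z^2 + z + \gamma$ into the remaining equation $X_1 + X_2 + X_3 + (\gamma + \gamma^q) X_6 = 0$ and simplifying, the terms linear in $z$ cancel in characteristic $2$ and the condition collapses to $\Tr_{\F_{q^2}/\F_q}(z^2 \w^q) = C'$ for an explicit constant $C' \in \F_q$. Because $z \mapsto \Tr_{\F_{q^2}/\F_q}(z^2 \w^q)$ is additive (the squaring is Frobenius) and surjective onto $\F_q$, this affine equation has exactly $q$ solutions, whence $|\sigma \cap S_\gamma^G| = q$ and $|\sigma \cap \cO| = q + 1$.

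It remains to identify these $q+1$ points as a non-degenerate conic, which I would do through the stabiliser $H = Stab_G(\sigma)$ of order $q$, generated by the $M_{1,0,\mu\sqrt{\w},1}$, $\mu \in \F_q$. A direct check gives $S_\gamma \in \sigma$, and computing the action of $H$ on the affine part $\sigma \setminus \ell_1$ yields the translations $\alpha \mapsto \alpha + \mu^2 + \mu/\sqrt{\w}$ of the parameter $\alpha$; since $\mu \mapsto \mu^2 + \mu/\sqrt{\w}$ is an injective additive map $\F_q \to \F_{q^2}$ (its kernel is trivial as $1/\sqrt{\w} \notin \F_q$), the orbit $S_\gamma^H$ has $q$ points and, by the count above, already exhausts $\sigma \cap S_\gamma^G$. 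Taking the $\F_q$-basis $\{1, 1/\sqrt{\w}\}$ and writing a point of $\sigma$ as $(u:v:\delta_1)$, the set $\{T\} \cup S_\gamma^H$ becomes $\{(1:0:0)\} \cup \{(u_0 + \mu^2 : v_0 + \mu : 1) : \mu \in \F_q\}$, which lies on the conic $v^2 + u\delta_1 + (u_0 + v_0^2)\delta_1^2 = 0$; this quadratic form admits no factorisation into linear forms over $\F_q$, so the conic is non-degenerate. As it carries exactly $q+1$ points, it coincides with $\sigma \cap \cO$, proving the claim.

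The step I expect to be the main obstacle is the algebraic simplification of the second defining equation of $\sigma$: one must carry out the elimination after substituting $x^2 = z^2 + z + \gamma$ and observe the characteristic-$2$ cancellation that turns the equation into the additive condition $\Tr_{\F_{q^2}/\F_q}(z^2 \w^q) = C'$. The two structural facts making everything fall into place are the irreducibility of $X^2 + X + \gamma$ over $\F_{q^2}$ (guaranteeing $x \ne 0$ for free, so that no admissible pair is lost in the substitution) and $\w + \w^q = 1$ (which produces the cancellations); once these are in hand, both the count $|\sigma \cap S_\gamma^G| = q$ and the explicit non-degenerate conic follow at once.
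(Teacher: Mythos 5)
Your proposal is correct, and its skeleton is the paper's: reduce to the representative plane $\sigma$ via the transitivity of $G$ on $\cG$ (Lemma~\ref{tangent}), observe $\sigma \cap \cQ = \{(1,1,0,0,0,0)\}$, and analyse the system that cuts $\sigma \cap S_\gamma^G$ out of the parametrization $P_{x,y}$. Where you genuinely diverge is in how the two computational steps are executed. For the count, the paper substitutes $(xy)^2 = x^2+xy+\gamma$ into the second equation and solves explicitly for $y$, then $x$, in terms of a parameter $\mu \in \F_q$, listing the $q$ points in coordinates; you instead set $z = xy$ (so that $x^2+xy+\gamma = z^2$) and reduce membership to the affine trace condition $\Tr_{\F_{q^2}/\F_q}(\w^q z^2) = 1$, which has exactly $q$ solutions by additivity. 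This is sound and cleaner: I checked that the linear terms $z\w^q + z^q\w$ do cancel and the $\gamma$-terms collapse via $\w+\w^q=1$, leaving constant $1$. For the conic, the paper exhibits the explicit quadratic equation through the $q+1$ points, whereas you use the stabilizer $H = Stab_G(\sigma)$ of order $q$: since $S_\gamma \in \sigma$, the orbit $S_\gamma^H$ sits inside $\sigma \cap S_\gamma^G$ and both have size $q$, so the affine part of $\sigma \cap \cO$ is a single $H$-orbit, which you identify as an affine parabola; this buys a structural explanation of why a conic appears at all. Two soft spots should be shored up. First, $|S_\gamma^H| = q$ is better justified by $\gcd(|H|, |Stab_G(S_\gamma)|) = \gcd(q, q^2+1) = 1$ than by the asserted translation action; and the claim that $H$ acts on $\sigma \setminus \ell_1$ by the specific translations $(u,v) \mapsto (u+\mu^2, v+\mu)$ — rather than by fixed-point-free affine involutions with nontrivial linear part, which also exist in characteristic $2$ — is exactly the kind of matrix computation the paper itself performs, and it must include the check that the parabola's point at infinity is $T$ and not another point of $\ell_1$ (the truth of these claims is confirmed by the paper's explicit point list, but your write-up only asserts them). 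Second, your non-degeneracy criterion (``no factorisation into linear forms over $\F_q$'') is insufficient as stated, since a degenerate conic may also split into two conjugate lines over $\F_{q^2}$; that case is excluded at once because your set has $q+1 \ge 3$ points not all collinear, or by checking that $v^2 + u\delta_1 + c\delta_1^2$ has no singular point.
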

\begin{proof}
By Lemma~\ref{tangent} it is enough to show that $\sigma \cap \cO$ is a non-degenerate conic. First of all observe that $\sigma \cap \cQ = \ell_1 \cap \cQ = \{(1, 1, 0,0,0,0)\}$. A point $P_{x, y} \in S_\gamma^G$ belongs to $\sigma$ if and only if 
\begin{align}
& x^2 y^2 + x^2 + x y + \gamma = 0, \label{l1:eq1} \\ 
& \w^q (x^2 + xy + \gamma) + \w (x^{2q} + x^q y^q + \gamma^q) + 1 = 0. \label{l1:eq2}
\end{align}   
Hence equation \eqref{l1:eq1} gives $(xy)^2 = x^2+xy+\gamma$. Since $x \ne 0$, by substituting it in \eqref{l1:eq2} we get 
\begin{align*}
& y = \frac{\sqrt{\w^q} + \mu^2}{\sqrt{\w^q} x},
\end{align*}
where $\mu \in \F_q$. By using again \eqref{l1:eq1} we obtain 
\begin{align}
& x = \frac{\mu^2 + \sqrt[4]{\w^q}\mu + \sqrt{\w^q \gamma}}{\sqrt{\w^q}}, \label{l1:x}
\end{align}
and hence
\begin{align}
& y = \frac{\sqrt{\w^q} + \mu^2}{\mu^2 + \sqrt[4]{\w^q}\mu + \sqrt{\w^q \gamma}}. \label{l1:y}
\end{align}
Note that the right hand side of equation \eqref{l1:x} cannot be zero, since it is irreducible over $\F_{q^2}$ as a polynomial in $\mu$. Taking into account \eqref{l1:x} and \eqref{l1:y}, it follows that the plane $\sigma$ has the $q$ points
\begin{align*}
\left( \frac{\mu^4 + \sqrt{\w^q} \mu^2 + \w^q \gamma}{\w^{q+1}}, \frac{\mu^4 + \sqrt{\w} \mu^2 + \w \gamma^q}{\w^{q+1}}, \frac{\mu^2 + \w^{2q} \gamma + \w^2 \gamma^q}{\w^{q+1}}, 1, 1, 1 \right), \mu \in \F_q, 
\end{align*}
in common with $S_\gamma^G$. Moreover the $q+1$ points of $\sigma \cap \cO$ are those of the non-degenerate conic of $\sigma$ given by 
\begin{align*}
 \w^{2(q+1)} \left(X_1^2 + X_2^2\right) + \w^{q+1} \sqrt{\w} X_1 X_6 + \w^{q+1} \sqrt{\w^q} X_2 X_6 + & \\
 (\w^{2q} \gamma^2 + \w^2 \gamma^{2q} + \w^q \sqrt{w} \gamma + \w \sqrt{\w^q} \gamma^q) X_6^2 & = 0.    
\end{align*}
The proof is now complete.
\end{proof}

\begin{theorem}\label{second}
The set $\cO$ is a $(q+1)$-ovoid of $\cW(5, q)$.
\end{theorem}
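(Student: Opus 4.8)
The plan is to reduce the statement to a case analysis over the generators (totally isotropic planes) of $\cW(5,q)$, checking in each case that the intersection with $\cO$ has size $q+1$. First I would record two preliminaries. Since $S_\gamma \notin \Pi$ and $G$ fixes $\Pi$, we have $S_\gamma^G \cap \Pi = \emptyset$; in particular $S_\gamma^G \cap \cQ = \emptyset$, so for every plane $\sigma$ one has $|\sigma \cap \cO| = |\sigma \cap \cQ| + |\sigma \cap S_\gamma^G|$. Moreover, by Lemma~\ref{perp}, $|\cO| = (q+1)(q^3+1)$, which is exactly the size forced on a $(q+1)$-ovoid: counting incident pairs (point of $\cO$, generator through it), using that each point lies on $(q+1)(q^2+1)$ generators while $\cW(5,q)$ has $(q+1)(q^2+1)(q^3+1)$ generators in total, the mean value of $|\sigma \cap \cO|$ over all generators equals $|\cO|/(q^3+1) = q+1$.

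Next I would classify the generators. A totally isotropic plane $\sigma$ either lies in $\Pi$ --- in which case it passes through $N$ and belongs to $\cG_1$ or $\cG_2$ by Lemma~\ref{plane} --- or it meets the hyperplane $\Pi$ in a line $\ell \not\ni N$ that is a line of $\cW(5,q)$, hence one of $\cS, \cL_0, \cL_1, \cL_2$ by Lemma~\ref{lines}; through each such $\ell$ there pass exactly $q$ generators not contained in $\Pi$ (the $q$ planes through $\ell$ in the solid $\ell^\perp$ other than $\langle \ell, N\rangle$). The four ``good'' families are then dispatched directly: a plane of $\cG_1$, and a plane meeting $\Pi$ in a line of $\cS$, contains a line of $\cS$, so meets $\cQ$ in $q+1$ points and is disjoint from $S_\gamma^G$ by Lemma~\ref{disjoint}, giving $|\sigma\cap\cO|=q+1$; a plane of $\cG_2$ lies in $\Pi$, hence misses $S_\gamma^G$, and meets $\cQ$ in a non-degenerate conic, again $q+1$; and a plane meeting $\Pi$ in a line of $\cL_1$ lies in the orbit $\cG$ of Lemma~\ref{tangent} and meets $\cO$ in a conic by Proposition~\ref{conic}, once more $q+1$.

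It remains to treat the generators meeting $\Pi$ in a line of $\cL_0$ (zero points on $\cQ$) or of $\cL_2$ (two points on $\cQ$); here $|\sigma \cap \cO|$ equals $|\sigma \cap S_\gamma^G|$, respectively $|\sigma \cap S_\gamma^G| + 2$. For these I would establish the upper bound $|\sigma \cap \cO| \le q+1$, i.e. $|\sigma \cap S_\gamma^G| \le q+1$ for an $\cL_0$-plane and $|\sigma \cap S_\gamma^G| \le q-1$ for an $\cL_2$-plane, by substituting the parametrization $P_{x,y}$ of $S_\gamma^G$ into the equations of a representative plane and analysing the resulting system in $x,y \in \F_{q^2}$, exactly as in the proof of Proposition~\ref{conic}. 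Granting this bound the argument closes at once: all four good families already meet $\cO$ in exactly $q+1$ points, and if the two remaining families meet it in at most $q+1$ points, then $\sum_\sigma |\sigma \cap \cO|$ is at most $(q+1)$ times the number of generators; since that mean value is exactly $q+1$, every generator meets $\cO$ in precisely $q+1$ points, so $\cO$ is a $(q+1)$-ovoid.

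The main obstacle is this last step. Unlike the line-orbits, $G$ is in general not transitive on the generators meeting $\Pi$ in a line of $\cL_0$ or $\cL_2$ (for $q>2$ the corresponding counts $q^3(q^2+1)(q-1)/2$ and $q^3(q^2+1)(q+1)/2$ do not divide $|G|=q^2(q^2-1)(q^2+1)$), so a single representative computation does not control the whole family and one must bound $|\sigma \cap S_\gamma^G|$ uniformly across each of the two families. This is exactly where the explicit description of $S_\gamma^G$ through $P_{x,y}$, together with the geometry of the elliptic sections $P^\perp \cap \Pi \cap \cQ$ furnished by Lemma~\ref{perp}, must be exploited; proving the bound $|\sigma \cap S_\gamma^G| \le q+1$ there is the crux of the argument.
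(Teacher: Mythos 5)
Your reduction and four of the six cases are sound: the size count giving average intersection $q+1$, the classification of generators (those in $\Pi$ versus those meeting $\Pi$ in a line of $\cS$, $\cL_0$, $\cL_1$, $\cL_2$), and the treatment of the $\cG_1$, $\cG_2$, $\cS$- and $\cL_1$-families via Lemmas~\ref{plane}, \ref{disjoint} and Proposition~\ref{conic} all work. But the proof is not complete, and the missing piece is exactly what you yourself flag at the end: the upper bound $|\sigma \cap S_\gamma^G| \le q+1$ (resp. $\le q-1$) for generators meeting $\Pi$ in a line of $\cL_0$ (resp. $\cL_2$) is asserted, not proven. Your proposed method --- substitute the parametrization $P_{x,y}$ into the equations of \emph{a representative} plane, as in Proposition~\ref{conic} --- cannot deliver it, for the reason you state: $G$ is not transitive on these two families of generators, so no finite list of representative computations controls them all. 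Nor does the natural geometric fallback close the gap: one can show (as in Proposition~\ref{contained}, which is independent of this theorem) that every line of such a plane meets $\cO$ in at most $2$ points, making $\sigma \cap \cO$ an arc of $\sigma \simeq \PG(2,q)$; but for $q$ even the arc bound is $q+2$ (hyperovals exist), and an averaging argument with some terms possibly equal to $q+2$ does not force equality at $q+1$. So the crux of the theorem is left open.

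It is worth seeing how the paper avoids this entirely. Instead of working generator by generator, it uses the equivalent characterization of $m$-ovoids from \cite{BKLP}: it suffices to show $|V^\perp \cap \cO| = q(q^2+1)+1$ for $V \in \cO$ and $(q+1)(q^2+1)$ for $V \notin \cO$. For $V \in \Pi$ this is a direct computation (Lemma~\ref{points} reduces to three representatives). For $V \notin \Pi$, the solid $V^\perp \cap \Pi$ carries a Desarguesian line-spread $\cD_r$ whose lines lie in $\cS \cup \cL_1$, and the $q^2+1$ planes $\langle V, r\rangle$, $r \in \cD_r$, are totally isotropic, pairwise meet only in $V$, and cover $V^\perp$; each is either a plane of $\cG$ (handled by Proposition~\ref{conic}) or contains a line of $\cS$ (handled by Lemma~\ref{disjoint}). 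Thus the point-side decomposition only ever produces planes of the types your argument already controls, and the problematic $\cL_0$- and $\cL_2$-generators never need to be analysed. If you want to salvage your generator-side approach, you would have to prove the sharp bound for those two families uniformly --- which in effect amounts to redoing the paper's computation in a harder guise.
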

\begin{proof}
In order to prove that $\cO$ is a $(q+1)$-ovoid, it is equivalent to show that for a point $V \in \Sigma$, $|V^\perp \cap \cO|$ equals $q(q^2+1) + 1$ or $(q+1)(q^2+1)$, according as $V \in \cO$ or $V \in \Sigma \setminus \cO$. The proof is based on the fact that a generator of $\cW(5, q)$ intersecting $\Pi$ in a line having one or $q+1$ points in common with $\cQ$, meets $\cO$ in $q+1$ points (see Lemma~\ref{disjoint}, Proposition~\ref{conic}). 

Let $V \in \cO$. If $V \in \cQ$, by Lemma~\ref{points}, we may assume that $V = (0,0,0,1,1,0)$. Hence $V^\perp: \w X_1 + \w^q X_2 = 0$ has in common with $S_\gamma^G$ the $q^2(q-1)$ points given by $P_{x, y}$, with $x, y \in \F_{q^2}$, $x+x^q = 0$, $x \neq 0$, whereas $V^\perp \cap \cQ$ is a quadratic cone since $N \in V^\perp$. Therefore $|V^\perp \cap \cO| = q^2(q-1) + q^2+q+1 = q(q^2+1) + 1$. If $V \in S_\gamma^G$, then we may assume that $V = S_\gamma$. By Lemma~\ref{perp} we have that $S_\gamma^\perp \cap \Pi$ is a solid intersecting $\cQ$ in an elliptic quadric. The solid $S_\gamma^\perp \cap \Pi$ possesses a Desarguesian line-spread $\cD_r$ consisting of $q^2+1$ lines of $\cL_1$. The $q^2+1$ planes of $\cW(5, q)$ spanned by $S_\gamma$ and a line of $\cD_r$ are planes of $\cG$ and they cover all the points of $S_\gamma^\perp \setminus \{S_\gamma\}$ exactly once. By Proposition~\ref{conic}, each of these planes has $q+1$ points in common with $\cO$, one of them being $S_\gamma$. Therefore $|V^\perp \cap \cO| = q(q^2+1) + 1$.      

Let $V \in \Sigma \setminus \cO$. If $V = N$, then $V^\perp = \Pi$. Hence $V^\perp \cap \cO = \cQ$ and $|V^\perp \cap \cO| = (q+1)(q^2+1)$. If $V \in \Pi \setminus (\cQ \cup \{N\})$, then by Lemma~\ref{points}, we may assume $V = (0,0,1,1,1,0)$. Hence $V^\perp: \w X_1 + \w^q X_2 + X_6 = 0$ intersects $S_\gamma^G$ in the $q^3$ points given by $P_{x, y}$, with $x, y \in \F_{q^2}$, $x + x^q = 1$; the set $V^\perp \cap \cQ$ is a quadratic cone since $N \in V^\perp$. Therefore $|V^\perp \cap \cO| = q^3 + q^2+q+1 = (q+1)(q^2+1)$. If $V \in \Sigma \setminus (\Pi \cup S_\gamma^G)$, then $V^\perp \cap \Pi$ is a solid not passing through $N$. Hence two possibilities occur: either $V^\perp \cap \Pi$ meets $\cQ$ in an elliptic quadric or in a hyperbolic quadric. Moreover there exists a Desarguesian line-spread $\cD_r$ of $V^\perp \cap \Pi$. If $V^\perp \cap \Pi \cap \cQ$ is an elliptic quadric, then $\cD_r$ consists of lines of $\cL_1$. In this case the $q^2+1$ planes of $\cW(5, q)$ spanned by $V$ and a line of $\cD_r$ are planes of $\cG$. By Proposition~\ref{conic}, each of these planes has $q+1$ points in common with $\cO$. Since $V \notin \cO$, it follows that $|V^\perp \cap \cO| = (q+1)(q^2+1)$. If $V^\perp \cap \Pi \cap \cQ$ is a hyperbolic quadric, then $\cD_r$ has two lines of $\cS$ and $q^2-1$ lines of $\cL_1$. In this case the $q^2-1$ planes of $\cW(5, q)$ spanned by $V$ and a line of $\cD_r$ are planes of $\cG$ and two of them meet $\Pi$ in a line of $\cS$. Since $V \notin \cO$, by Proposition~\ref{conic} and Lemma~\ref{disjoint} we obtain that $|V^\perp \cap \cO| = (q+1)(q^2+1)$. 
\end{proof}

\begin{remark}
The set $\cO$ is not a quadric. More precisely, there exists no quadric $\cA$ of $\Sigma$ such that the set $S_\gamma^G$ coincides with $\cA \setminus \Pi$. Assume by contradiction that $\cA$ is a quadric of $\Sigma$ such that $\cA \setminus \Pi = S_\gamma^G$. First of all observe that $\cA$ would intersect $\Pi$ in $\cQ$. Indeed, by Proposition~\ref{conic}, Lemma~\ref{tangent} and Lemma~\ref{lines}, a plane of $\cG$ intersects $\cA \setminus \Pi$ in $q$ points of a conic and such a conic has its remaining point on $\cQ$. Since through five points of a plane no three on a line there pass a unique conic (\cite[Corollary 7.5]{H1}), then necessarily $\cQ \subset \cA$, whenever $q \ge 8$. Next consider the line $l$ joining the points $R = (0,0,0,1,1,0) \in \cQ$ and $P_{x,{y_{1}}} \in S_\gamma^G$, where $x$ is a fixed element in $\F_{q^2}$ such that $x + x^q = 1$ with $\w^q x^2 + \w x^{2q} \ne 0$ and $y_1$ is an element of $\F_{q^2}$ satisfying the equation $x^2 y_1^2 + x y_1 + x^2 + \gamma = 0$. Some calculations show that $l$ meets $S_\gamma^G$ at the further point $P_{x, y_1 + \frac{\w (x^2 + x^{2q})}{x(\w^q x^2 + \w x^{2q})}} = P_{x, y_1} + \left(\frac{x+x^{q}}{\w^q x^2 + \w x^{2q}}\right)^2 R$. Therefore $|l \cap \cA| = 3$, contradicting the assumption that $\cA$ is a quadric. If $q \in \{2, 4\}$, some computations performed with Magma \cite{magma} confirm that no quadric $\cA$ of $\Sigma$ meets $\Sigma \setminus \Pi$ exactly in $S_\gamma^G$.
\end{remark}

\begin{prop}\label{contained}
A line of $\cW(5, q)$ has $0$, $1$, $2$ or $q+1$ points in common with $\cO$. In particular there are exactly $q^2+1$ lines of $\cW(5, q)$ contained in $\cO$.
\end{prop}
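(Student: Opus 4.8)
The plan is to partition the lines $r$ of $\cW(5,q)$ according to whether or not $r\subseteq\Pi$, exploiting that $S_\gamma^G\cap\Pi=\emptyset$ and $\cO\cap\Pi=\cQ$. If $r\subseteq\Pi$, then $r\cap\cO=r\cap\cQ$. A line through the nucleus $N$ is tangent to $\cQ$, so (as $q$ is even) meets it in exactly one point; indeed the $q^3+q^2+q+1$ lines through $N$ in $\Pi$ are in bijection with the points of $\cQ$. A line not through $N$ lies, by Lemma~\ref{lines}, in one of $\cS,\cL_0,\cL_1,\cL_2$ and meets $\cQ$ in $q+1,0,1,2$ points respectively. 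Hence $|r\cap\cO|\in\{0,1,2,q+1\}$ for every line contained in $\Pi$, each value occurs, and $r\subseteq\cO$ holds precisely when $r\in\cS$; this already exhibits the $q^2+1$ lines of $\cS$ contained in $\cO$.

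The crux is then to show that every line $r\not\subseteq\Pi$ meets $\cO$ in at most two points, which will finish both assertions. Such an $r$ meets $\Pi$ in a single point $P$, and since $N^\perp=\Pi$ forces every line of $\cW(5,q)$ through $N$ into $\Pi$, we have $P\neq N$ and $N\notin r^\perp$. I would set $\rho=r^\perp\cap\Pi$, which is a plane with $P\in\rho$ and $N\notin\rho$. Because the symplectic form is non-degenerate, the induced form on $r^\perp/r$ is a non-degenerate alternating form on a plane, so all $q+1$ of its points are isotropic: the $q+1$ generators of $\cW(5,q)$ through $r$ are exactly the planes $\langle r,m\rangle$, where $m$ runs over the $q+1$ lines of $\cW(5,q)$ through $P$ in $\rho$. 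It therefore suffices to produce one such $m$ lying in $\cL_1$ or in $\cS$. If $m\in\cL_1$, then $\langle r,m\rangle$ belongs to the orbit $\cG$ by Lemma~\ref{tangent}, so by Proposition~\ref{conic} it meets $\cO$ in a non-degenerate conic and $r$ meets that conic in at most two points. If $m\in\cS$, then Lemma~\ref{disjoint} together with the $(q+1)$-ovoid property gives $\langle r,m\rangle\cap\cO=m$, whence $r\cap\cO=r\cap m=\{P\}$.

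The existence of such an $m$ is the step I expect to be the main obstacle, and I would settle it by a parity count. The set $\rho\cap\cQ$ is the zero locus of the nonzero restriction of the quadratic form to the plane $\rho$ (nonzero since $\cQ(4,q)$ contains no plane), hence a plane conic, so $|\rho\cap\cQ|\in\{1,q+1,2q+1\}$, which is odd because $q$ is even. Each of the $q+1$ lines of $\cW(5,q)$ through $P$ in $\rho$ meets $\cQ$ in $0,1,2$ or $q+1$ points, i.e.\ lies in $\cL_0,\cL_1,\cL_2$ or $\cS$. Suppose none lay in $\cL_1\cup\cS$; then each line through $P$ meets $\cQ$ in $0$ or $2$ points, and since every point of $\rho\cap\cQ$ other than $P$ lies on a unique line through $P$, summing over these lines yields $|\rho\cap\cQ|=q+2$ if $P\in\cQ$ and $|\rho\cap\cQ|=2s$ (with $s$ the number of secant lines) if $P\notin\cQ$ — an even number in either case, contradicting the oddness above. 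Hence some $m\in\cL_1\cup\cS$ exists, and the reduction of the previous paragraph applies.

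Combining the two cases, every line of $\cW(5,q)$ meets $\cO$ in $0,1,2$ or $q+1$ points, and it is contained in $\cO$ precisely when it lies in $\Pi$ and belongs to $\cS$; therefore there are exactly $q^2+1$ lines of $\cW(5,q)$ contained in $\cO$.
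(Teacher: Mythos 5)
Your proof is correct, but the key step is handled by a genuinely different mechanism than the paper's. For a line $r\not\subset\Pi$, the paper splits on whether $r$ meets $S_\gamma^G$: if $P\in r\cap S_\gamma^G$, it invokes Lemma~\ref{perp} (via the $G$-action) to get that $P^\perp\cap\Pi$ is a solid meeting $\cQ$ in an elliptic quadric carrying the Desarguesian spread $\cD_r$ of $\cL_1$-lines, takes the unique spread line through $r\cap\Pi$, and spans with $r$ a plane of $\cG$ to which Proposition~\ref{conic} applies (the case $r\cap S_\gamma^G=\emptyset$ being trivial and left implicit). You instead work uniformly with the plane $\rho=r^\perp\cap\Pi$: since $\rho\subseteq P^\perp$ for $P=r\cap\Pi$ and $N\notin\rho$, all $q+1$ lines of $\rho$ through $P$ are lines of $\cW(5,q)$ covered by Lemma~\ref{lines}, and the parity argument — $|\rho\cap\cQ|\in\{1,q+1,2q+1\}$ is odd for $q$ even, while an all-$\cL_0\cup\cL_2$ pencil through $P$ would force it even — produces a line $m\in\cL_1\cup\cS$, after which Lemma~\ref{tangent} with Proposition~\ref{conic}, or Lemma~\ref{disjoint}, finishes exactly as in the paper. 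What your route buys is uniformity and economy of hypotheses: no case split on $r\cap S_\gamma^G$, no appeal to Lemma~\ref{perp} or to the spread $\cD_r$, just nondegeneracy of the form and the fact that $\cQ(4,q)$ contains no planes; what the paper's route buys is brevity, since the spread machinery is already set up and reused from the proof of Theorem~\ref{second}. One cosmetic remark: in the $m\in\cS$ subcase you do not need the $(q+1)$-ovoid property (Theorem~\ref{second}) — although citing it is legitimate here, since the proposition comes after that theorem — because Lemma~\ref{disjoint} already gives $\langle r,m\rangle\cap\cO=\langle r,m\rangle\cap\cQ\subseteq\langle r,m\rangle\cap\Pi=m$, whence $r\cap\cO=\{P\}$ directly.
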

\begin{proof}
Let $\ell$ be a line of $\cW(5, q)$. If $\ell \subset \Pi$ then $\ell \cap \cO = \ell \cap \cQ$ and hence $\ell$ has $0$, $1$, $2$ or $q+1$ points in common with $\cO$. If $\ell \cap \Pi$ is a point and $|\ell \cap S_\gamma^G| \ge 1$, we denote by $P$ a point of $S_\gamma^G \cap \ell$. The solid $P^\perp \cap \Pi$ intersects $\cQ$ in an elliptic quadric and possesses a Desarguesian line-spread $\cD_r$ consisting of $q^2+1$ lines of $\cL_1$. Let $r_1$ be the unique line of $\cD_r$ intersecting $\ell$ in one point. The plane $g$ of $\Sigma$ spanned by $\ell$ and $r_1$ is a generator of $\cW(5, q)$ belonging to $\cG$. By Lemma~\ref{conic}, the plane $g$ meets $\cO$ in a conic, and hence $\ell$ meets $\cO$ in either one or two points.

As a consequence we have that a line of $\cW(5, q)$ contained in $\cO$ has to be contained in $\cQ$. By Lemma~\ref{lines} the lines having this property are exactly those of $\cS$.  
\end{proof}

\section{The isomorphism issue}\label{iso}

An $m$-ovoid of $\cW(2n+1, q)$ obtained by field reduction will be called {\em classical}. A classical $\left(\frac{q^n-1}{q-1}\right)$-ovoid of $\cW(2n+1, q)$ is acquired solely from a $\left( \frac{q^n-1}{q^2-1} \right)$-ovoid of $\cH(n, q^2)$ for an even $n$, that is by applying field reduction to the points of a Hermitian polar space $\cH(n, q^2)$, with $n$ even. In this case, if $q$ is even, the pointset obtained in $\cW(2n+1, q)$ is an elliptic quadric $\cQ^-(2n+1, q)$ of $\PG(2n+1, q)$ polarizing to $\cW(2n+1, q)$. In particular, a generator of $\cW(2n+1, q)$ meets $\cQ^-(2n+1, q)$ in a generator of $\cQ^-(2n+1, q)$, that is, an $(n-1)$-space, whereas a line of $\cW(2n+1, q)$ has either $1$ or $q+1$ points in common with $\cQ^-(2n+1, q)$. 
By subsection~\ref{FirstConstr}, there exists an elliptic quadric of $\PG(2n+1, q)$ not polarizing to $\cW(2n+1, q)$ which gives rise to a $\left(\frac{q^n-1}{q-1}\right)$-ovoid of $\cW(2n+1, q)$. Such an example is not classical due to Corollary~\ref{cor:lines} and by Corollary~\ref{cor:disjoint} it does not come from an $(n-1)$-system of $\cW(2n+1, q)$.  

In the case $n=2$, there are two further known constructions of non-classical $(q+1)$-ovoids of $\cW(5, q)$, $q > 2$, see \cite{CP}. In both cases the geometric setting consists of a pencil $\cP$ of quadrics comprising $q/2$ elliptic quadrics, say $\cE_1, \dots, \cE_{q/2}$, $q/2$ hyperbolic quadrics and a cone with basis a parabolic quadric, say $\cQ(4,q)$, and as vertex its nucleus $V$. Each elliptic quadric of $\cP$ polarizes to the same symplectic polar space $\cW(5, q)$. Let $\perp$ be the polarity of $\PG(5, q)$ defining $\cW(5, q)$. The base locus of $\cP$ is the parabolic quadric $\cQ(4, q)$ and the cone of $\cP$ is contained in $V^\perp$. 

The first construction arises from relative hemisystems of elliptic quadrics. A {\em relative hemisystem} of $\cE_i$ is a set $\cR_i$ of $q^2(q^2-1)/2$ points of $\cE_i \setminus \cQ(4, q)$ such that every line of $\cE_i$ meeting $\cQ(4, q)$ in one point has exactly $q/2$ points in common with $\cR_i$. Consider two distinct elliptic quadrics $\cE_j$, $\cE_k$ of $\cP$, for some fixed indexes $j$, $k$, with $j \ne k$, together with a relative hemisystem $\cR_j \subset \cE_j$, $\cR_k \subset \cE_k$ and take the union $\cO_1 = \cR_j \cup \cR_k \cup \cQ(4, q)$. Then $\cO_1$ is a $(q+1)$-ovoid of $\cW(5, q)$.

The second construction occurs if $q = 2^{2h+1}$, $h > 0$, and comes from the action of the Suzuki group $Sz(q)$ fixing a Suzuki-Tits ovoid $\cT$ of $\cQ(4, q)$. In particular $Sz(q)$ has two orbits, $\cT$ and $\cQ(4, q) \setminus \cT$ on points of $\cQ(4, q)$, and two orbits $A_i$, $B_i$ on points of $\cE_i \setminus \cQ(4, q)$, $i = 1, \dots, q/2$, with sizes $|A_i| = (q^3-q^2)(q-\sqrt{2q}+1)/2$ and $|B_i| = (q^3-q^2)(q+\sqrt{2q}+1)/2$. A line of $\cE_i$ intersecting $\cQ(4, q)$ in a point of $\cT$ has $q/2$ points in common with both $A_i$ and $B_i$, whereas a line of $\cE_i$ intersecting $\cQ(4, q)$ in a point off $\cT$ meets $A_i$ in $(q- \sqrt{2q})/2$ points and $B_i$ in $(q+ \sqrt{2q})/2$ points, respectively. As before, let $\cE_j$, $\cE_k$ be two distinct elliptic quadrics of $\cP$, for some fixed indexes $j$, $k$, with $j \ne k$. Let $\cO_2$ be the set $A_j \cup B_k \cup \cQ(4, q)$. Then $\cO_2$ is a $(q+1)$-ovoid of $\cW(5, q)$.

\begin{prop}
If $\ell$ is a line of $\cW(5, q)$, then
\begin{align*}
    & |\ell \cap \cO_1| \in \left\{0, 1, 2, \frac{q}{2}+1, q+1\right\}, \\
    & |\ell \cap \cO_2| \in \left\{0,1,2,\frac{q-\sqrt{2q}}{2}+1, \frac{q+\sqrt{2q}}{2}+1, q+1 \right\}.
\end{align*}
In particular there are exactly $(q+1)(q^2+1)$ lines of $\cW(5, q)$ contained in both $\cO_1$ and $\cO_2$.
\end{prop}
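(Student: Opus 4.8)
The plan is to classify a line $\ell$ of $\cW(5,q)$ by its position relative to the hyperplane $V^\perp$ and to the members of the pencil $\cP$, and then to read $|\ell\cap\cO_1|$ and $|\ell\cap\cO_2|$ off the intersection data already recorded for $\cR_i$ and for $A_i,B_i$. Two structural observations drive the argument. First, an elliptic member $\cE_i$ cannot contain the $4$-space $V^\perp$ (its generators are lines), so $\cE_i\cap V^\perp$ is a proper quadric section of $V^\perp$; since it contains the base locus $\cQ(4,q)$, a nondegenerate parabolic quadric of $V^\perp$, it must equal it. Hence $\cE_i\setminus\cQ(4,q)=\cE_i\setminus V^\perp$, and in particular $\cR_j,\cR_k$ (resp. $A_j,B_k$) lie entirely off $V^\perp$. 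Second, because every $\cE_i$ polarizes to $\cW(5,q)$, a line of $\cW(5,q)$ meets each $\cE_i$ in exactly $1$ or $q+1$ points, and each point of $\ell$ outside the base locus lies on a unique member of $\cP$.

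First I would treat the lines $\ell\subseteq V^\perp$: here $\ell$ is disjoint from $\cR_j\cup\cR_k$ and from $A_j\cup B_k$, so $\ell\cap\cO_1=\ell\cap\cO_2=\ell\cap\cQ(4,q)$, and a line of $V^\perp\cong\PG(4,q)$ meets $\cQ(4,q)$ in $0$, $1$ or $2$ points, or in $q+1$ points exactly when it is a generator; this produces the values $\{0,1,2,q+1\}$ shared by both lists. For $\ell\not\subseteq V^\perp$ set $\{P_0\}=\ell\cap V^\perp$, so $\ell\cap\cQ(4,q)$ is empty or equals $\{P_0\}$. The informative subcase is when $\ell$ is a generator of one of the two distinguished quadrics, say $\cE_j$: then $\ell\cap\cQ(4,q)=\ell\cap V^\perp=\{P_0\}$ is a single point, so $\ell$ is one of the lines of $\cE_j$ meeting $\cQ(4,q)$ in exactly one point, while $\ell\cap\cE_k=\ell\cap(\cE_j\cap\cE_k)=\{P_0\}\subset\cQ(4,q)$ shows $\ell$ misses $\cR_k$ (resp. $B_k$). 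For $\cO_1$ the relative hemisystem property then gives $|\ell\cap\cO_1|=|\ell\cap\cR_j|+1=q/2+1$, while for $\cO_2$ the Suzuki orbit properties give $|\ell\cap\cO_2|=|\ell\cap A_j|+1=(q-\sqrt{2q})/2+1$ when $P_0\notin\cT$, the mirror computation for a generator of $\cE_k$ contributing $(q+\sqrt{2q})/2+1$ through $B_k$.

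Every remaining line off $V^\perp$ — a generator of a different elliptic member, a generator of a hyperbolic member, or a line contained in no member — meets $\cE_j$ and $\cE_k$ in a single point each, hence contributes at most one point to $\cR_j$ and to $\cR_k$ (resp. to $A_j$ and $B_k$) and at most one to $\cQ(4,q)$. That such a line never reaches intersection number $3$ is the crux, and it rests on the base locus: if $P_0\in\cQ(4,q)$ then $P_0$ lies on every member of $\cP$, forcing $\ell\cap\cE_j=\ell\cap\cE_k=\{P_0\}$, and since the hemisystem and Suzuki parts avoid $\cQ(4,q)$ we obtain $|\ell\cap\cO_1|=|\ell\cap\cO_2|=1$; if $P_0\notin\cQ(4,q)$ then $\ell\cap\cQ(4,q)=\emptyset$ while $\ell\cap\cE_j$ and $\ell\cap\cE_k$ are two distinct points off $V^\perp$, each contributing at most one, so the total is $0$, $1$ or $2$. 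Assembling the contributions from all line types yields the two displayed sets.

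For the final clause I would observe that throughout this classification the value $q+1$ is attained only by lines $\ell\subseteq\cQ(4,q)$, every other type giving a strictly smaller number once $q>2$. Conversely, a generator of the parabolic quadric $\cQ(4,q)$ lies in $\cE_i\cap V^\perp$, so it is a generator of $\cE_i$ and therefore a line of $\cW(5,q)$, and it is contained in both $\cO_1$ and $\cO_2$ since both contain $\cQ(4,q)$. Thus the lines of $\cW(5,q)$ contained in either ovoid are precisely the generators of $\cQ(4,q)$, and a parabolic quadric of $\PG(4,q)$ has $(q+1)(q^2+1)$ of them. I expect the main difficulty to be exactly the bookkeeping of the two middle paragraphs: tracking, for a line off $V^\perp$, which member of $\cP$ each of its points lies on, and using the base locus to forbid a single line from picking up points of $\cQ(4,q)$ and of the hemisystem or Suzuki parts at the same time.
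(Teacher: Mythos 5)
Your overall framework---stratifying the lines of $\cW(5,q)$ by their position relative to $V^\perp$ and to the members of the pencil $\cP$, then reading the intersection numbers from the stated properties of $\cR_i$ and of $A_i,B_i$---is sound, and for $\cO_1$ your argument is complete and correct. It is a mild variant of the paper's proof: the paper embeds $\ell$ in a generator $g$ of $\cW(5,q)$, notes that $g\cap V^\perp$ is a totally isotropic line $r$ and that $g$ meets each $\cE_i$ in a line $r_i$ through the point $r\cap\cQ(4,q)$, and then compares $\ell$ with $r_j,r_k$; you argue directly on $\ell$ via tangency and the base locus, which works equally well. One small inaccuracy in your first case: a line of $\cW(5,q)$ contained in $V^\perp$ is totally isotropic, hence tangent to or contained in every $\cE_i$, and since $\cE_i\cap V^\perp=\cQ(4,q)$ it meets $\cQ(4,q)$ in $1$ or $q+1$ points, never $0$ or $2$. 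This does not affect the claimed membership (the values $0,1,2$ are in both lists anyway), but it is precisely the fact the paper uses there, and your treatment of such lines as arbitrary lines of $\PG(4,q)$ ignores the isotropy hypothesis.

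The genuine gap is in the $\cO_2$ analysis: for a generator $\ell$ of $\cE_j$ (resp.\ $\cE_k$) not contained in $V^\perp$ you only treat the subcase $P_0\notin\cT$. When $P_0\in\cT$, the quoted orbit data give $|\ell\cap A_j|=q/2$ (and likewise $|\ell\cap B_k|=q/2$ for a generator of $\cE_k$), hence $|\ell\cap\cO_2|=\frac{q}{2}+1$, a value that does not appear in the proposition's list and is distinct from every listed value since $q=2^{2h+1}\ge 8$. This case is not vacuous: through each of the $q^2+1$ points of $\cT$ there pass $q^2-q$ generators of $\cE_j$ not contained in $V^\perp$, and all of them are lines of $\cW(5,q)$. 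So the omitted subcase cannot be argued away, and completing your own case analysis contradicts the displayed statement for $\cO_2$ rather than proving it. In fairness, this is not something a cleverer argument could avoid: the paper proves the $\cO_1$ half in detail and dismisses $\cO_2$ with ``a similar proof holds'', and carrying that similar proof out (in the paper's notation, take $\ell=r_j$ with $P\in\cT$, so that $g\cap\cO_2$ consists of $P$ together with $q/2$ points of $r_j\cap A_j$ and $q/2$ points of $r_k\cap B_k$) produces the same extra value $\frac{q}{2}+1$. The printed list for $\cO_2$ therefore appears to be incomplete; your write-up should either include $\frac{q}{2}+1$ or flag the discrepancy explicitly, but silently skipping the $P_0\in\cT$ subcase is the one step at which your proof, as written, fails.
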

\begin{proof}
Let $\ell$ be a line of $\cW(5, q)$. Since $\cO_1 \cap V^\perp = \cQ(4, q)$ and a line of $\cW(5, q)$ contained in $V^\perp$ meets $\cQ(4, q)$ in one or $q+1$ points, we have that if $\ell$ is contained in $V^\perp$, then $\ell$ has either $1$ or $q+1$ points in common with $\cO_1$. If $\ell \cap V^\perp$ is a point, let $g$ be a generator of $\cW(5, q)$ with $\ell \subset g$. Thus $g \cap V^\perp$ is a line, say $r$. If $r \subset \cQ(4, q)$, then $|g \cap (\cR_j \cup \cR_k)| = 0$ since $\cO_1 \setminus \cQ(4, q) = \cR_j \cup \cR_k$. Hence $\ell \cap \cO_1 = \ell \cap r$ and $|\ell \cap \cO_1| = 1$. If $r \cap \cQ(4, q)$ is a point $P$, then $g \cap \cR_i$, consists of $q/2$ points of a line $r_i$ of $\cE_i$, $i = 1, \dots, q/2$. Furthermore $P = r_j \cap r_k$. Therefore in this case $\ell \cap \cO_1$ has either $0$, or $1$, or $2$, or $q/2+1$ points. 

A similar proof holds for $\cO_2$. Finally, note that the lines of $\cW(5, q)$ contained in $\cO_i$, $i = 1,2$ are those of $\cQ(4, q)$.  
\end{proof}

Denote by $\cO$ and $\cO'$ the $(q+1)$-ovoid of $\cW(5, q)$ obtained in Theorem~\ref{second} and Theorem~\ref{first}. By Proposition~\ref{contained} and Proposition~\ref{prop:no_disjoint}, there are exactly $q^2+1$ and $q^2(q+1)+1$ lines of $\cW(5, q)$ contained in $\cO$ and $\cO'$, respectively. Therefore $\cO_1$, $\cO_2$, $\cO$, $\cO'$ are four distinct pairwise non-isomorphic examples of $(q+1)$-ovoids of $\cW(5, q)$, $q$ even. Taking into account Corollary~\ref{cor:disjoint} and the fact that $\cO$ contains precisely $q^2+1$ lines of $\cW(5, q)$, we infer that neither $\cO$ nor $\cO'$ consists of the points covered by a $1$-system of $\cW(5, q)$. 

\section{The $m$-ovoids of $\cW(5, 2)$}\label{W(5,2)}

Let $\cX$ be an $m$-ovoid of $\cW(5, 2)$. Since $\cW(5, 2)$ has no ovoids \cite{Thas}, we may assume that $m \in \{2, 3\}$. Let $A$ be the incidence matrix with rows indexed by points of $\cW(5, 2)$ and columns indexed by generators of $\cW(5, 2)$. Then $\v = (x_1, \dots, x_{63})$ is the characteristic vector of $\cX$ if and only if 
\begin{align*}
\v A = m \j, \label{ilp} 
\end{align*}
where $x_i \in \{0, 1\}$, $\sum_{i = 1}^{63} x_i = 9m$ and $\j$ denotes the all ones vector of size $135$. The problem of determining the entries of $\v$ can be seen as an integer linear programming problem. 
Solving it by using the mixed integer linear programming software Gurobi \cite{gurobi} we find that it is infeasible if $m = 2$ and there are exactly three pairwise non-isomorphic examples if $m = 3$. The first example arises from an elliptic quadric $\cQ^-(5, 2)$ polarizing to $\cW(5, 2)$, whereas the other two are the $3$-ovoids described in Theorem~\ref{first} and Theorem~\ref{second}. 

\smallskip
{\footnotesize
\noindent\textit{Acknowledgments.}
This work was supported by the Italian National Group for Algebraic and Geometric Structures and their Applications (GNSAGA-- INdAM).}

\end{document}